\newtheorem{theorem}{Theorem}[section]
\newtheorem{ass}[theorem]{Assumption}
\newtheorem{alg}[theorem]{Algorithm}
\newtheorem{lemma}[theorem]{Lemma}
\newtheorem{cor}[theorem]{Corollary}
\theoremstyle{definition}
\theoremstyle{remark}
\newtheorem{remark}[theorem]{Remark}
\numberwithin{equation}{section}
\begin{document}
\title
[Monotone Inclusion Problems in Reflexive Banach Spaces]
 {Convergence of Two Simple Methods for Solving Monotone Inclusion Problems in Reflexive Banach Spaces}

\author{Chinedu Izuchukwu$^1$, Simeon Reich$^{2}$, Yekini Shehu$^3$}

\keywords{Bregman resolvent; forward-backward splitting method; gas market; generalized Nash equilibrium problem; maximal monotone operator; monotone inclusion; reflexive Banach space. \\
{\rm 2010} {\it Mathematics Subject Classification}: 47H09; 47H10; 49J20; 49J40\\\\
{$^{1, 2}$Department of Mathematics, The Technion -- Israel Institute of Technology, 32000 Haifa, Israel.}\\
{$^{3}$College of Mathematics and Computer Science, Zhejiang Normal University, Jinhua 321004, People’s Republic of China.}\\
{$^{1}\mbox{izuchukwu}\_c$@yahoo.com; chi.izuchukw@campus.technion.ac.il}\\
{$^2$sreich@technion.ac.il}\\
{$^{3}$yekini.shehu@zjnu.edu.cn}}
\begin{abstract}
 We propose two very simple methods, the first one with constant step sizes and the second one with self-adaptive step sizes, for finding a zero of the sum of two monotone operators in real reflexive Banach spaces. Our methods require only one evaluation of the single-valued operator at each iteration. Weak convergence results are obtained when the set-valued operator is maximal monotone and the single-valued operator is Lipschitz continuous, and strong convergence results are obtained when either one of these two operators is required, in addition, to be strongly monotone. We also obtain the rate of convergence of our proposed methods in real reflexive Banach spaces. Finally, we apply our results to solving generalized Nash equilibrium problems for gas markets.\\

\noindent  {\bf Keywords:} Forward-backward type method; monotone inclusion; weak convergence; strong convergence; Banach spaces.\\

\noindent {\bf 2020 MSC classification:} 47H05, 47J20,  47J25, 65K15, 90C25.
\end{abstract}

\maketitle

\section{Introduction}\label{Se1}
\noindent Let $\mathbb{X}^*$  be the dual of a real reflexive Banach space $\mathbb{X}$ and let $\mathcal{B}:\mathbb{X}\to 2^{\mathbb{X}^*}$ be a set-valued operator. The Null Point Problem (NPP) is formulated as follows:
\begin{eqnarray}\label{NPP}
\mbox{Find} ~v\in \mathbb{X} ~\mbox{such that}~ 0^* \in  \mathcal{B}v.
\end{eqnarray}
\noindent This problem is very important in optimization theory and related fields. Many optimization problems such as minimization problems, equilibrium problems, and saddle point problems, can be modelled as Problem \eqref{NPP}. For instance, if $\mathcal{B}$ is the subdifferential of a proper, convex and lower semicontinuous function, then Problem \eqref{NPP} is equivalent to the problem of minimizing this convex function. Also, Problem \eqref{NPP} describes the equilibrium or stable state of an evolution system governed by the  operator $\mathcal{B}$, which is very important in ecology, physics and economics, as well as in other fields (see \cite{C}). One of the most popular methods for solving Problem \eqref{NPP} is the Proximal Point Algorithm (PPA), which was introduced in Hilbert spaces by Martinet \cite{27ZMO} in 1970. The PPA was further developed by Rockafellar \cite{31ZMO} and Bruck and Reich \cite{BR} in 1976 and 1977, respectively. Since then, the PPA has been modified by several authors in order to solve Problem \eqref{NPP} (see, for example,  \cite{BMR, NR, RR, RS1, RS2}). We mention, in particular, the paper by Reich and Sabach \cite{RS1}. The authors of this paper introduced the following modification of the PPA for solving Problem \eqref{NPP} in a reflexive Banach space:
\begin{eqnarray}\label{RS}
\begin{cases}
x_1\in \mathbb{X}\\
y_n=\mbox{Res}^g_{\mu_n \mathcal{B}}(x_n)\\
C_{n}=\{z\in \mathbb{X}: D_g(z, y_n)\leq D_g(z, x_n)\}\\
Q_n=\{z\in \mathbb{X}: \langle \nabla g(x_1)-\nabla g(x_n), z-x_n\rangle\leq 0\}\\
x_{n+1}=\mbox{Proj}^g_{C_{n}\cap Q_n}(x_1), ~n\geq 1,
\end{cases}
\end{eqnarray}
where $\{\mu_n\}$ is a given sequence of positive real numbers, $\mbox{Res}^g_{\mathcal{B}}$ is the resolvent of $\mathcal{B}$, $\nabla g$ is the gradient of $g$ and $\mbox{Proj}^g_{\mathcal{C}}$ is the Bregman projection of $\mathbb{X}$ onto a nonempty, closed and convex subset $\mathcal{C}$ of $\mathbb{X}$.\\
  In the same paper, the authors studied two extensions of Algorithm \eqref{RS} that solve Problem \eqref{NPP} for finitely many  maximal monotone operators in real reflexive Banach spaces.

\noindent Continuing the work of Reich and Sabach \cite{RS1}, Ogbuisi and Izuchukwu \cite{OI} studied the following inclusion problem in a reflexive Banach space $\mathbb{X}$:
\begin{eqnarray}\label{MVIP}
\mbox{Find} ~v\in \mathbb{X} ~\mbox{such that}~ 0^* \in  \left(\mathcal{A}+ \mathcal{B}\right)v,
\end{eqnarray}
where  $\mathcal{A}:\mathbb{X}\rightarrow \mathbb{X}^*$ is a single-valued operator and $\mathcal{B}$ is as defined in \eqref{NPP}. \\
If the operators are monotone, then Problem \eqref{MVIP} can be referred to as a Monotone Inclusion Problem (MIP). It is worth mentioning that if $\mathcal{B}=N_\mathcal{C}$ in Problem \eqref{MVIP}, where $N_\mathcal{C}$ is the normal cone associated with a nonempty, closed and convex subset $\mathcal{C}$ of $\mathbb{X}$, then Problem \eqref{MVIP} reduces to the classical variational inequality problem.
Also, when $\mathcal{A}= 0$, we recover the NPP \eqref{NPP} as a special case of Problem \eqref{MVIP}. Furthermore, several problems in signal processing, image recovery and machine learning can be formulated as Problem \eqref{MVIP}. Therefore, we see that Problem \eqref{MVIP} is quite general; it naturally includes many other important optimization problems such as minimization problems, linear inverse problems, saddle-point problems, fixed point problems, split feasibility problems, Nash equilibrium problems in noncooperative games, and many more. Several authors have studied Problem \eqref{MVIP} using the well-known forward-backward splitting method and its modifications in real Hilbert spaces (see, for instance, \cite{AC, BauschkePro, Bui, 4SS, 5SS, Tseng, 11SS, Malitsky2}).\\
In order to solve Problem \eqref{MVIP} in reflexive Banach spaces, Ogbuisi and Izuchukwu \cite{OI} proposed the following iterative method:
\begin{eqnarray}\label{3*}
\begin{cases}
u, x_1\in C,~~C_1= C\\
y_n=\nabla g^*\left(\alpha_n \nabla g(u) +\beta_n\nabla g(x_n)+\gamma_n \nabla g(T (x_n ))\right)\\
u_n=(\mbox{Res}^g_{\mu \mathcal{B}}\circ A^g_\mu)y_n\\
C_{n+1}=\{z\in C_n: D_g(z, u_n)\leq  \alpha_n D_g(z, u)+ (1-\alpha_{n}) D_g(z, x_n)\}\\
x_{n+1}=\mbox{Proj}^g_{C_{n+1}}(x_1), ~n\geq 1,
\end{cases}
\end{eqnarray}
where $\mu>0$, $\{\alpha_n\},~\{\beta_n\}$ and $\{\gamma_n\}$ are sequences in $(0, 1)$, $A_{\mu}^g:=\nabla g^*\circ (\nabla g-\mu \mathcal{A})$ and $T$ is a Bregman strongly nonexpansive mapping. \\
These authors proved that the sequence generated by Algorithm \eqref{3*} converges to a common solution of Problem \eqref{MVIP} and the fixed point problem for the mapping $T$, provided that the solution set of the problem is nonempty, $\mathcal{A}$ is Bregman inverse strongly monotone, $\mathcal{B}$ is maximal monotone,
$\lim_{n\to\infty} \alpha_n =0,~\alpha_n +\beta_n+\gamma_n=1$, and $0<a<\beta_n,\gamma_n<b<1$. \\
 Recently, Chang {\it et al.} \cite{SS} proposed a method similar to \eqref{3*} (see \cite[Algorithm (11)]{SS} for solving Problem \eqref{MVIP} when $\mathcal{A}$ is Bregman inverse strongly monotone and $\mathcal{B}$ is maximal monotone. Other recent methods for solving the MIP \eqref{MVIP}, under the same assumptions on $\mathcal{A}$ and  $\mathcal{B}$ in reflexive Banach spaces, can be found in \cite{Ogbuisi, CI1, BEDRP, Tang, Tuyen}.

\noindent Very recently, Sunthrayuth {\it et al.} \cite{Sun} proposed the following  modification of Tseng's splitting method \cite{Tseng} for solving Problem \eqref{MVIP} in a reflexive Banach space when $\mathcal{A}$ is monotone and Lipschitz continuous, and $\mathcal{B}$ is maximal monotone:
\begin{eqnarray}\label{Sun}
\begin{cases}
x_1\in \mathbb{X}\\
y_n=\mbox{Res}^g_{\mu_n \mathcal{B}} \nabla g^*\left(\nabla g (x_n)-\mu_n \mathcal{A} x_n\right)\\
z_n=\nabla g^*\left(\nabla g (y_n)-\mu_n (\mathcal{A}y_n-\mathcal{A} x_n)\right)\\
x_{n+1}=\nabla g^*\left((1-\alpha_n)\nabla g (z_n)+\alpha_n \nabla g(Tz_n))\right), ~n\geq 1,
\end{cases}
\end{eqnarray}
where $\{\alpha_n\}$ is a sequence in $(0, 1)$, $\mu_n=\gamma l^{m_n}$ and $m_n$ is the smallest nonnegative integer such that
\begin{eqnarray}\label{LS}
\mu_n||\mathcal{A}x_n-\mathcal{A}y_n||\leq \mu ||x_n-y_n||,
\end{eqnarray}
with $\gamma>0, l\in (0, 1), \mu \in (0, \alpha)$ and  $\alpha>0$.\\
Furthermore, they  proposed a variant of \eqref{Sun}--\eqref{LS} by replacing only \eqref{LS} with a self-adaptive step size procedure (see \cite[Algorithm 2]{Sun}) for solving Problem \eqref{MVIP} under the same assumptions on $\mathcal{A}$ and $\mathcal{B}$ in a reflexive Banach space.\\
At this point we mention another modification of Tseng's splitting method due to Shehu \cite{Shehu} for solving the MIP \eqref{MVIP} in the context of $2$-uniformly convex and uniformly smooth Banach spaces, when $\mathcal{A}$ is monotone and Lipschitz continuous, and $\mathcal{B}$ is maximal monotone.\\
Unfortunately, all these methods for solving the MIP \eqref{MVIP}, when $\mathcal{A}$ is monotone and Lipschitz continuous in reflexive Banach spaces, require at each iteration at least two evaluations of $\mathcal{A}$ and this might affect the efficiency of these methods especially in situations where evaluating the operator $\mathcal{A}$ is expensive, for instance, in problems arising from optimal control.

\noindent In this paper, inspired by the results in \cite{SS, OI, Malitsky2, Shehu, Sun}, we propose two methods for solving the MIP \eqref{MVIP} in real reflexive Banach spaces.  Our methods have simple and elegant structures, and they only require one evaluation of $\mathcal{A}$ at each iteration.
We obtain weak convergence results when $\mathcal{B}$ is maximal monotone and $\mathcal{A}$ is Lipschitz continuous, and we obtain strong convergence results when either $\mathcal{A}$ or $\mathcal{B}$ is required, in addition, to be strongly monotone. We also obtain the rate of convergence of our proposed methods in real reflexive Banach spaces and apply our results to solving generalized Nash equilibrium problems for gas markets.

\noindent The rest of our paper is organized as follows:  Section \ref{Se2} contains basic definitions and results. In Section \ref{Se3}, we present and discuss the methods we propose. In Section \ref{Se4}, we obtain some convergence results for our methods. As special cases, we also obtain methods for solving the NPP \eqref{NPP} and the variational inequality problem.
In Section \ref{Se5}, we consider a generalized Nash equilibrium approach for modelling gas markets and apply our results to solving this problem.
We then give some concluding remarks in Section \ref{Se6}.

\section{Preliminaries}\label{Se2}
\noindent Let $\mathbb{X}$ be a real reflexive Banach space and let $\mathbb{X}^*$ be its dual. Let $g:\mathbb{X}\to(-\infty, +\infty]$ be a function. The domain of $g$, denoted by ${dom}_g$, is defined by ${dom}_g := \{x\in \mathbb{X}: g(x)<+\infty\}$.
 The function $g:\mathbb{X}\to (-\infty, +\infty]$ is called proper if ${dom}_g\neq \emptyset$, and convex if $g(\lambda x+(1-\lambda) y)\leq \lambda g(x)+(1-\lambda)g(y) \; ~\forall x, y\in \mathbb{X},~\lambda\in (0, 1)$.
A function $g:{dom}_g\subseteq \mathbb{X}\to(-\infty, \infty]$ is said to be lower semicontinuous at a point $x\in {dom}_g$, if $ g(x)\leq \liminf_{n\to\infty} g(x_n)$ for each sequence $\{x_n\}$ in ${dom}_g$ such that $\underset{n\to\infty}\lim x_n=x$. It is is said to be lower semicontinuous on ${dom}_g$ if it is lower semicontinuous at each point in ${dom}_g$.

\noindent The Fenchel conjugate of $g$ is the function $g^*:\mathbb{X}^*\to (-\infty, \infty]$ defined by $$g^*(x^*) := \sup\{\langle x^*, x\rangle-g(x): x\in \mathbb{X}\}.$$
Let $x\in$ $int({dom}_g)$, where $int({dom}_g)$ stands for the interior of the domain of $g$. Then for any $y\in \mathbb{X}$, we define the right-hand derivative of $g$ at $x$ by
\begin{eqnarray}\label{GD}
g' (x, y):=\lim_{\lambda\to 0^+} \frac{g(x+\lambda y)-g(x)}{\lambda}.
\end{eqnarray}
The function $g$ is said to be G\^{a}teaux differentiable at $x$ if the limit in \eqref{GD} exists as $\lambda\to 0$ for each $y\in \mathbb{X}$. In this case, the gradient of $g$ at $x$ is the linear function $\nabla g(x)$, defined by $\langle \nabla g(x), y\rangle:=g' (x,y)~ \forall y\in \mathbb{X}$. We say that $g$ is G\^{a}teaux differentiable if it is G\^{a}teaux differentiable at each $x\in$ $int({dom}_g)$.
If the limit in \eqref{GD} is attained uniformly for $y\in \mathbb{X}$ with $||y||=1$, we say that $g$ is Fr\'echet differentiable at $x$. If the limit in \eqref{GD} is attained uniformly for $x\in \mathcal{C}$ and for $y\in \mathbb{X}$ with $||y||=1$, then we say that the function $g$ is uniformly Fr\'echet differentiable on the subset $\mathcal{C}$ of $\mathbb{X}$.\\
The function $g$ is said to be Legendre if it satisfies the following two conditions.
 \begin{enumerate}
 \item[(i)] $g$ is G\^{a}teaux differentiable, $int({dom}_g)\neq \emptyset$ and ${dom}_{\nabla g}=$ $int({dom}_g)$;
 \item[(ii)] $g^*$ is G\^{a}teaux differentiable,  $int({dom}_{g^*})\neq \emptyset$ and  ${dom}_{\nabla g^*}=$ $int({dom}_{g^*})$.
 \end{enumerate}
It is well known that $\nabla g=(\nabla g^*)^{-1}$ in reflexive Banach spaces. Combining this fact with conditions (i) and (ii), we get that
${ran}_{\nabla g}={dom}_{\nabla g^*}=$ $int({dom}_{g^*})$ and ${ran}_{\nabla g^*}={dom}_{\nabla g}=int({dom}_g)$, where ${ran}_{g}$ denotes the range of $g$. \\
We also know that conditions (i) and (ii) imply that the functions $g$ and $g^*$ are G\^{a}teaux differentiable and strictly convex in the interior of their respective domains. Hence, $g$ is Legendre if and only if $g^*$ is Legendre.

\noindent  The bifunction $D_g:{dom}_g\times {int({dom}_g)}\to [0, +\infty)$, defined by
 \begin{eqnarray}
 D_g(x, y):=g(x)-g(y)-\langle \nabla g(y), x-y\rangle,
 \end{eqnarray}
 is called a Bregman distance. If $g$ is a G\^{a}teaux differentiable function, then the Bregman distance has the following important property, called the three point identity:  for any $x\in {dom}_g$ and $y, z\in {int({dom}_g)}$,
 \begin{eqnarray}\label{3P}
 D_g(x,y)+D_g(y, z)-D_g(x, z)=\langle \nabla g(z)-\nabla g(y), x-y\rangle.
 \end{eqnarray}
\noindent A G\^{a}teaux differentiable function $g$ is called strongly convex (see \cite{31LY, 32LY}), if there exists $\gamma>0$ such that $\langle \nabla g(x)-\nabla g(y), x-y\rangle\geq \gamma ||x-y||^2$  or equivalently, $g(y)\geq g(x)+\langle \nabla g(x), y-x\rangle+\frac{\gamma}{2}||x-y||^2 \; ~\forall x,y\in {dom}_g$.
\begin{remark}
If $g$ is a strongly convex function with constant $\gamma>0$, then
\begin{eqnarray}\label{17AA}
D_g(x, y)\geq \frac{\gamma}{2}||x-y||^2~\forall x\in {dom}_g,~y\in {int({dom}_g)}.
\end{eqnarray}
\end{remark}
\noindent The function $g:\mathbb{X}\to \mathbb{R}\cup\{+\infty\}$ is said to be strongly coercive if
$$\lim_{||x||\to\infty}\frac{g(x)}{||x||}=+\infty.$$
\begin{remark} \label{CONT} (see \cite{33LY}).
If $g:\mathbb{X}\to \mathbb{R}$ is strongly coercive, then
\begin{enumerate}
\item[(i)] $\nabla g:\mathbb{X}\to \mathbb{X}^*$ is one-to-one, onto and norm-to-weak* continuous;
\item[(ii)] $\{x\in \mathbb{X} : D_g(x, y)\leq r\}$ is bounded for all $y\in \mathbb{X}$ and $r>0$;
\item[(iii)] ${dom}_{g^*}=\mathbb{X}^*$, $g^*$ is G\^{a}teaux differentiable and $\nabla g^*=(\nabla g)^{-1}$.
\end{enumerate}
\end{remark}
\noindent The operator $\mathcal{A}: \mathbb{X}\rightarrow \mathbb{X}^*$ is said to be $L$-{\it Lipschitz continuous} if there exists $L>0$ such that
	\begin{align*}
	\|\mathcal{A}x-\mathcal{A}y\|\leq L\|x-y\|~ \forall x,y\in \mathbb{X}.
	\end{align*}
$A$ is called {\it $\tau$-strongly monotone} if there exists  $\tau>0$ such that
	\begin{align*}
	\langle \mathcal{A}x-\mathcal{A}y,x-y\rangle\geq \tau\|x-y\|^2~ \forall x,y \in \mathbb{X},
	\end{align*}
and {\it monotone} if
	\begin{align*}\langle \mathcal{A}x-\mathcal{A}y,x-y\rangle\geq 0 \; ~ \forall x,y \in \mathbb{X}.
	\end{align*}
\noindent If $\mathcal{B}$ is a set-valued operator, that is, $\mathcal{B}:\mathbb{X}\rightarrow 2^{\mathbb{X}^*}$, then $\mathcal{B}$ is called
{\it $\tau$-strongly monotone} if there exists  $\tau>0$ such that
	\begin{align*}
	\langle u-v,x-y\rangle\geq \tau\|x-y\|^2~\forall x,y \in \mathbb{X},~u\in \mathcal{B}x,~v\in \mathcal{B}y,
	\end{align*}
and  {\it monotone} if $$\langle u-v, x-y\rangle\geq 0 \; ~\forall x,y \in \mathbb{X},~u\in \mathcal{B}x,~v\in \mathcal{B}y.$$ The monotone operator $\mathcal{B}$ is said to be {maximal} if the graph $G(\mathcal{B})$ of $\mathcal{B}$, defined by
$$G(\mathcal{B}):=\{(x, y)\in \mathbb{X}\times \mathbb{X^*} : y\in \mathcal{B}x\},$$ is not properly contained in the graph of any other monotone operator.
In other words, $\mathcal{B}$ is maximal monotone if and only if for $(x,u)\in \mathbb{X}\times \mathbb{X}^*$, the assumption that $\langle u-v, x-y\rangle \geq 0$ for all $(y,v)\in G(\mathcal{B})$ implies that $u\in \mathcal{B}x$. It is known \cite[Corollary 2.4]{7Sun} that if $g:\mathbb{X}\to \mathbb{R}$ is G\^{a}teaux differentiable, strictly convex and cofinite, then $\mathcal{B}$ is maximal monotone if and only if ${ran}_{(\nabla g +\mu \mathcal{B})}=\mathbb{X}^*$.  \\
Let $g:\mathbb{X}\to \mathbb{R}$ be a G\^{a}teaux differentiable function on $\mathbb{X}$. Then the resolvent operator $\mbox{Res}_{\mu \mathcal{B}}^g$ associated with a set-valued operator $\mathcal{B}$ and $\mu>0$, and relative to $g$, is the mapping $\mbox{Res}_{\mu \mathcal{B}}^g: \mathbb{X}\rightarrow 2^{\mathbb{X}}$ defined by
\begin{eqnarray}\label{RES}
\mbox{Res}_{\mu \mathcal{B}}^g := (\nabla g+\mu \mathcal{B})^{-1} \circ \nabla g.
\end{eqnarray}
The resolvent operator is single-valued when $\mathcal{B}$ is monotone and $g$ is strictly convex on $int({dom}_g)$.

\noindent Let $\mathcal{C}$ be a nonempty, closed and convex subset of a reflexive Banach space $\mathbb{X}$. The mapping $\mathcal{A}:\mathbb{X}\to 2^{\mathbb{X}^*}$ is said to be Bregman inverse strongly monotone on the set $\mathcal{C}$ if $\mathcal{C}\cap ({dom}_g)\cap int({dom}_g)\neq \emptyset$ and for any $x, y\in \mathcal{C}\cap int ({dom}_g)$, $\xi \in \mathcal{A}x$ and $\eta \in \mathcal{A}y$, we have $\langle \xi-\eta, \nabla g^*(\nabla g(x)-\xi)-\nabla g^*(\nabla g(y)-\eta)\rangle\geq 0.
$

\noindent Let $g:\mathbb{X}\to \mathbb{R}\cup\{+\infty\}$ be a convex and G\^{a}teaux differentiable function, and let $\mathcal{C}$ be a nonempty, closed and convex subset of a  real reflexive Banach space $\mathbb{X}$. The Bregman projection of $x\in int({dom}_g)$ onto $\mathcal{C}\subset int({dom}_g)$ is the unique vector ${Proj}^g_\mathcal{C}(x)\in \mathcal{C}$ satisfying (see \cite{BREG}) $D_g \left({Proj}^g_\mathcal{C} (x), x\right)=\inf \{ D_g (y, x):y\in \mathcal{C}\}.$\\
\noindent The normal cone of $\mathcal{C}$ at a point $z\in \mathbb{X}$ is defined by
\begin{eqnarray}\label{NC}
N_\mathcal{C} z:=\{d^*\in \mathbb{X}^*: \langle d^*, y-z\rangle \leq 0 \; ~~\forall y\in \mathcal{C}\} \; ~\mbox{if}~z\in \mathcal{C}~~\mbox{and}~\emptyset, ~\mbox{otherwise}.
\end{eqnarray}

\begin{lemma}\cite{11Cholam}\label{2.17}
Let $g:\mathbb{X}\to \mathbb{R}$ be a proper, convex, lower semicontinuous  and G\^{a}teaux differentiable on $int({dom}_g)$ such that $\nabla g^*$ is bounded on bounded subsets of ${dom}_{g^*}$. Let $x^*\in \mathbb{X}$ and $\{x_n\}\subset$ $\mathbb{X}$. If $\{D_g(x, x_n)\}$ is bounded, so is the sequence $\{x_n\}$.
\end{lemma}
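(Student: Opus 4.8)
The plan is to argue through the dual space. Since $g$ maps into $\mathbb{R}$ it is a finite, continuous convex function, and because one will have $x_n=\nabla g^*(\nabla g(x_n))$ while $\nabla g^*$ is assumed bounded on bounded subsets of ${dom}_{g^*}$, it suffices to prove that the sequence $\{\nabla g(x_n)\}$ is bounded in $\mathbb{X}^*$. So the proof splits into two steps: (1) deduce boundedness of $\{\nabla g(x_n)\}$ from boundedness of $\{D_g(x,x_n)\}$; (2) transport this bound back to $\{x_n\}$.

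For step (1), I would first rewrite the Bregman distance in terms of $g^*$. Since $g$ is G\^{a}teaux differentiable, $\nabla g(x_n)\in\partial g(x_n)$, so the Fenchel--Young equality gives $g(x_n)+g^*(\nabla g(x_n))=\langle\nabla g(x_n),x_n\rangle$; substituting this into the definition of $D_g$ produces
$$
D_g(x,x_n)=g(x)+g^*(\nabla g(x_n))-\langle\nabla g(x_n),x\rangle .
$$
Combining this identity with the Fenchel--Young inequality $g^*(\nabla g(x_n))\ge\langle\nabla g(x_n),x+w\rangle-g(x+w)$, which holds for every $w\in\mathbb{X}$, the term $\langle\nabla g(x_n),x\rangle$ cancels and one is left with $\langle\nabla g(x_n),w\rangle\le D_g(x,x_n)+g(x+w)-g(x)$. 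Now $\{D_g(x,x_n)\}$ is bounded, say by $M$, and a finite-valued, lower semicontinuous convex function on a Banach space is continuous, hence bounded above on the closed ball of radius one about $x$ by some constant $C$. Taking the supremum over $\|w\|\le 1$ then yields $\|\nabla g(x_n)\|=\sup_{\|w\|\le1}\langle\nabla g(x_n),w\rangle\le M+C-g(x)$ for all $n$, so $\{\nabla g(x_n)\}$ is bounded.

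For step (2), observe that $\{\nabla g(x_n)\}\subseteq {ran}_{\nabla g}\subseteq {dom}_{g^*}$ (if $\xi\in\partial g(z)$ then $g^*(\xi)=\langle\xi,z\rangle-g(z)<\infty$), so $\{\nabla g(x_n)\}$ is a bounded subset of ${dom}_{g^*}$ and, by hypothesis, $\nabla g^*$ carries it into a bounded subset of $\mathbb{X}$. Finally, since $g$ is convex and lower semicontinuous one has $g^{**}=g$, so the Fenchel--Young equality is symmetric and $\nabla g(x_n)\in\partial g(x_n)$ is equivalent to $x_n\in\partial g^*(\nabla g(x_n))$; as $\nabla g^*$ is defined at $\nabla g(x_n)$, the latter set is the singleton $\{\nabla g^*(\nabla g(x_n))\}$, whence $x_n=\nabla g^*(\nabla g(x_n))$ and $\{x_n\}$ is bounded.

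The algebra and the continuity of the finite convex function $g$ are routine; the one step carrying real content is the extraction of a bound on $\{\nabla g(x_n)\}$, and the idea there is to test the Fenchel--Young inequality against the entire ball $x+w$, $\|w\|\le1$, so that the supremum over $w$ recovers the dual norm of $\nabla g(x_n)$ rather than a mere scalar estimate. I would also take care with the identification $x_n=\nabla g^*(\nabla g(x_n))$, which is the only place the (implicit) differentiability of $g^*$ on ${ran}_{\nabla g}$ enters; everything else uses only that $g$ is finite, convex, lower semicontinuous and G\^{a}teaux differentiable.
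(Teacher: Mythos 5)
Your proof is correct: rewriting $D_g(x,x_n)=g(x)+g^*(\nabla g(x_n))-\langle\nabla g(x_n),x\rangle$ via Fenchel--Young, testing the Fenchel--Young inequality over the unit ball about $x$ (using continuity, hence local boundedness, of the finite convex lsc function $g$) to bound $\{\nabla g(x_n)\}$ in $\mathbb{X}^*$, and then using $x_n=\nabla g^*(\nabla g(x_n))$ together with the hypothesis that $\nabla g^*$ is bounded on bounded subsets of ${dom}_{g^*}$ is exactly the standard argument. The paper itself offers no proof of this lemma (it is quoted from \cite{11Cholam}), and your argument is essentially the one given in that reference, including the appropriate care about where differentiability of $g^*$ is actually used.
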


\begin{lemma}\cite{35LY}\label{Opial}
Let $\mathbb{X}$ be a Banach space, and let $g:\mathbb{X}\to(-\infty, \infty]$ be a proper and strictly convex function such that $g$ is G\^{a}teaux differentiable. Suppose that there exists a point $p\in \mathbb{X}$ such that the sequence $\{x_n\}$ in $\mathbb{X}$ converges weakly to $p$. Then
$$\limsup_{n\to\infty} D_g(p, x_n)<\limsup_{n\to\infty} D_g(q, x_n)$$ for all $q\in {int({dom}_g)}$ with $p\neq q$.
\end{lemma}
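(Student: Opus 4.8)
The plan is to reduce the statement to one exact identity for $D_g(q,x_n)$, pass to the $\limsup$, and extract the strict inequality from the strict convexity of $g$. Throughout I will (as the statement implicitly requires) take $p\in int({dom}_g)$, so that $\nabla g(p)$, $D_g(q,p)$ and $D_g(p,x_n)$ are all meaningful; if $p\notin {dom}_g$ the left-hand side is $+\infty$ and the asserted strict inequality cannot hold, so that case is excluded by reading of the statement.

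First I would apply the three-point identity \eqref{3P} with $x=q$, $y=p$, $z=x_n$; rearranging gives, for every $n$,
\[
D_g(q,x_n)=D_g(p,x_n)+D_g(q,p)+\langle \nabla g(p)-\nabla g(x_n),\,q-p\rangle .
\]
(The same identity also falls out directly from the definition of $D_g$ by adding and subtracting $g(p)$ and $\langle\nabla g(x_n),p-x_n\rangle$.) The point of writing it this way is that the only term that is neither nonnegative nor independent of $n$ is $\varepsilon_n:=\langle \nabla g(p)-\nabla g(x_n),\,q-p\rangle$, so the whole argument hinges on showing $\varepsilon_n\to 0$. This is the step I expect to be the crux: since $x_n\rightharpoonup p$ in $\mathbb{X}$ and $\nabla g$ carries weakly convergent sequences to weak$^{*}$-convergent ones (i.e. $\nabla g(x_n)\rightharpoonup^{*}\nabla g(p)$ in $\mathbb{X}^{*}$), and $q-p$ is a fixed vector, the pairing $\langle \nabla g(p)-\nabla g(x_n),\,q-p\rangle$ tends to $0$. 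It is worth flagging that this is exactly where care is needed: the weak-to-weak$^{*}$ sequential continuity of $\nabla g$ is the real content hiding behind the hypotheses on $g$, and is not forced by strict convexity and Gâteaux differentiability alone, so this is the point I would isolate and justify from the standing assumptions on $g$ (or from the ambient reflexive setting).

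Finally I would take $\limsup_{n\to\infty}$ in the displayed identity. Writing it as $D_g(q,x_n)=\bigl(D_g(p,x_n)+D_g(q,p)\bigr)+\varepsilon_n$ and using $\limsup_n(a_n+b_n)\ge\limsup_n a_n+\liminf_n b_n$ together with $\varepsilon_n\to 0$ gives
\[
\limsup_{n\to\infty}D_g(q,x_n)\ \ge\ \limsup_{n\to\infty}D_g(p,x_n)+D_g(q,p).
\]
(Equivalently, and perhaps more transparently, one runs the identity along a subsequence $\{x_{n_k}\}$ with $D_g(p,x_{n_k})\to\limsup_n D_g(p,x_n)$; then $D_g(q,x_{n_k})\to\limsup_n D_g(p,x_n)+D_g(q,p)$, which yields the same bound.) To conclude, note that $D_g(q,p)>0$: since $g$ is strictly convex and $q\ne p$, the subgradient inequality is strict, $g(q)>g(p)+\langle\nabla g(p),q-p\rangle$, that is, $D_g(q,p)>0$. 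Combining this with the last display gives $\limsup_n D_g(p,x_n)<\limsup_n D_g(q,x_n)$, as required.
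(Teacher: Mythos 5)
Your reduction via the three-point identity \eqref{3P}, the $\limsup$ manipulation, and the strict-convexity argument giving $D_g(q,p)>0$ are all fine, but the step you yourself flag as the crux is a genuine gap, and it cannot be closed from the stated hypotheses: the claim that $x_n\rightharpoonup p$ forces $\nabla g(x_n)\rightharpoonup^{*}\nabla g(p)$, hence $\varepsilon_n=\langle \nabla g(p)-\nabla g(x_n),q-p\rangle\to 0$. Sequential weak-to-weak$^{*}$ continuity of $\nabla g$ is not a consequence of properness, strict convexity and G\^{a}teaux differentiability, nor of the paper's standing assumptions (strong convexity, uniform Fr\'echet differentiability, strong coercivity, Legendre) together with reflexivity. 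For example, in $\mathbb{X}=L_p[0,1]$ with $1<p\le 2$ and $g=\tfrac12\|\cdot\|^2$, all of these hypotheses hold, yet $\nabla g$ is the normalized duality mapping, which is classically known not to be sequentially weak-to-weak$^{*}$ continuous when $p\neq 2$ (this is closely tied to the failure of Opial's property in $L_p[0,1]$). Reflexivity only gives weak$^{*}$ cluster points of $\{\nabla g(x_n)\}$ with no reason for them to equal $\nabla g(p)$, and monotonicity of $\nabla g$ only gives weak--strong closedness of its graph, so \textit{justifying this step from the ambient setting}, as you propose, is not possible; without control of $\varepsilon_n$ the identity $D_g(q,x_n)=D_g(p,x_n)+D_g(q,p)+\varepsilon_n$ yields nothing. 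Note also that the paper itself offers no proof to compare against: the lemma is quoted from \cite{35LY}.

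It is worth seeing where the obstruction comes from. The Bregman--Opial inequality that admits an elementary proof under exactly these hypotheses is the one with the sequence in the \emph{first} slot of $D_g$: writing
\begin{equation*}
D_g(x_n,q)-D_g(x_n,p)=g(p)-g(q)-\langle \nabla g(q),x_n-q\rangle+\langle \nabla g(p),x_n-p\rangle,
\end{equation*}
the gradients are evaluated at the fixed points $p$ and $q$, so weak convergence of $\{x_n\}$ alone gives the limit $D_g(p,q)>0$ and hence $\limsup_n D_g(x_n,p)<\limsup_n D_g(x_n,q)$. With the sequence in the \emph{second} slot, as in the statement you were asked to prove, every such decomposition leaves a pairing $\langle \nabla g(x_n),q-p\rangle$ along the sequence, and some additional continuity property of $\nabla g$ (for instance, the weak-to-weak$^{*}$ sequential continuity you invoke) has to be \emph{assumed} to dispose of it. So either prove and use the first-argument version, or state explicitly the extra hypothesis on $\nabla g$ that your $\varepsilon_n\to 0$ step requires; as written, the proof is incomplete.
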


\begin{lemma}\label{le5} \cite[Corollary 2.1 and Theorem 1.7]{4Sun}
Let $\mathcal{A}:\mathbb{X}\rightarrow \mathbb{X}^*$ be a monotone and  Lipschitz continuous operator, and let $\mathcal{B}:\mathbb{X}\rightarrow 2^{\mathbb{X}^*}$ be a maximal monotone operator. Then $\mathcal{A}+\mathcal{B}$ is maximal monotone.
\end{lemma}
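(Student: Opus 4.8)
\noindent The plan is to reduce the assertion to a surjectivity statement and then to solve the resulting inclusion by a fixed-point argument. First I would note that $\mathcal{A}+\mathcal{B}$ is automatically monotone, with $\operatorname{dom}(\mathcal{A}+\mathcal{B})=\operatorname{dom}(\mathcal{B})\neq\emptyset$, because $\mathcal{A}$ is defined on all of $\mathbb{X}$; so only maximality is in question. I would then invoke the characterization recalled above (see \cite{7Sun}): for a G\^{a}teaux differentiable, strictly convex and cofinite $g:\mathbb{X}\to\mathbb{R}$, a monotone operator $\mathcal{M}$ is maximal monotone if and only if $\operatorname{ran}(\nabla g+\mu\mathcal{M})=\mathbb{X}^*$ (for some $\mu>0$, equivalently for every $\mu>0$). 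So I fix such a $g$, taken in addition strongly convex with constant $\gamma>0$ and strongly coercive (hence cofinite, by Remark~\ref{CONT}). Since $\mathcal{B}$ is maximal monotone, $\operatorname{ran}(\nabla g+\mu\mathcal{B})=\mathbb{X}^*$ for every $\mu>0$, so the operator $R_\mu:=(\nabla g+\mu\mathcal{B})^{-1}$ is defined on all of $\mathbb{X}^*$, and it is single-valued because $g$ is strictly convex and $\mathcal{B}$ is monotone.

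\noindent The heart of the matter is then to show, for an arbitrary $f^*\in\mathbb{X}^*$ and a small $\mu\in(0,\gamma/L)$, that the map $T(x):=R_\mu(f^*-\mu\mathcal{A}x)$ has a fixed point on $\mathbb{X}$; such a fixed point satisfies $f^*-\mu\mathcal{A}x\in\nabla g(x)+\mu\mathcal{B}x$, i.e.\ $f^*\in\nabla g(x)+\mu(\mathcal{A}+\mathcal{B})x$. To see that $T$ is a contraction I would put $u_i:=T(x_i)$ and choose $b_i\in\mathcal{B}u_i$ with $f^*-\mu\mathcal{A}x_i=\nabla g(u_i)+\mu b_i$; subtracting these identities and pairing with $u_1-u_2$, the term in $b_1-b_2$ is nonnegative by monotonicity of $\mathcal{B}$, while $\langle\nabla g(u_1)-\nabla g(u_2),u_1-u_2\rangle\geq\gamma\|u_1-u_2\|^2$ by strong convexity of $g$, so that the Cauchy--Schwarz inequality together with the $L$-Lipschitz continuity of $\mathcal{A}$ gives $\gamma\|u_1-u_2\|^2\leq\mu L\,\|x_1-x_2\|\,\|u_1-u_2\|$, i.e.\ $\|T(x_1)-T(x_2)\|\leq(\mu L/\gamma)\|x_1-x_2\|$ with ratio $<1$. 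By the Banach contraction principle $T$ has a fixed point, hence $\operatorname{ran}(\nabla g+\mu(\mathcal{A}+\mathcal{B}))=\mathbb{X}^*$, and the characterization then yields that $\mathcal{A}+\mathcal{B}$ is maximal monotone.

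\noindent The step I expect to be the main obstacle is exactly this contraction estimate, which succeeds only because the Lipschitz constant $L$ of $\mathcal{A}$ can be balanced against the strong-convexity constant $\gamma$ of $g$ by shrinking $\mu$; one also needs to be comfortable that verifying the range condition for a single $\mu$ is enough, which is fine since that condition characterizes maximality. An alternative that avoids requiring $g$ strongly convex, and which is closer to the proof in \cite{4Sun}, would be to first observe that $\mathcal{A}$ is itself maximal monotone---a monotone, everywhere-defined and (being Lipschitz) continuous operator on a reflexive Banach space is maximal monotone, as one sees by testing the defining inequality of maximal monotonicity along segments $y=x-tz$ and letting $t\downarrow 0$---and then to apply the sum theorem for maximal monotone operators, whose constraint qualification $\operatorname{int}(\operatorname{dom}\mathcal{A})\cap\operatorname{dom}\mathcal{B}=\mathbb{X}\cap\operatorname{dom}\mathcal{B}\neq\emptyset$ holds trivially; the drawback of that route is that the sum theorem---equivalently, a Brezis--Crandall--Pazy perturbation argument using Yosida approximations of $\mathcal{B}$---is itself a substantial result rather than an elementary computation.
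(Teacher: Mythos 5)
You should know at the outset that the paper does not prove this lemma at all: it is quoted verbatim from Barbu's monograph \cite{4Sun}, whose proof is essentially the route you sketch in your final paragraph (a monotone, everywhere defined, continuous operator is maximal monotone, and a perturbation/sum theorem in reflexive spaces then handles $\mathcal{A}+\mathcal{B}$). Your main argument is therefore a genuinely different, self-contained alternative, and it is correct as far as it goes: the reduction to the range condition $\operatorname{ran}(\nabla g+\mu(\mathcal{A}+\mathcal{B}))=\mathbb{X}^*$ via the characterization in \cite{7Sun} is legitimate (and you rightly note that verifying it for a single $\mu>0$ suffices, since $\mu\mathcal{M}$ is maximal iff $\mathcal{M}$ is); the resolvent $(\nabla g+\mu\mathcal{B})^{-1}$ is everywhere defined and single-valued for the reasons you give; and the contraction estimate $\gamma\|T(x_1)-T(x_2)\|\leq \mu L\|x_1-x_2\|$ is exactly right, with the monotonicity of $\mathcal{B}$ discarding the $b_1-b_2$ term, strong convexity supplying the $\gamma\|u_1-u_2\|^2$ lower bound, and Lipschitz continuity of $\mathcal{A}$ the upper bound, so that Banach's principle applies for $\mu\in(0,\gamma/L)$. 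What your proof buys is a short, quantitative argument using only the Bregman machinery the paper already assumes; what the classical route buys is generality and independence from any auxiliary function $g$, at the price of invoking a substantial theorem (Rockafellar's sum theorem or a Brezis--Crandall--Pazy-type perturbation result), which is precisely why the authors simply cite it.

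The one point you should make explicit is the existence of your auxiliary $g$. The lemma as stated holds in every real reflexive Banach space, but an everywhere finite, G\^ateaux differentiable function that is strongly convex with respect to the norm (quadratic modulus) does not exist on every reflexive space: its existence forces a $2$-uniformly convex-type renorming (equivalently, its conjugate would have a Lipschitz Fr\'echet gradient, which fails, e.g., in $L^{p'}$ with $1<p'<2$), so for instance $L^{p}$ with $p>2$ carries no such $g$. Consequently your fixed-point proof establishes the lemma only under the paper's standing assumption that such a $g$ exists on $\mathbb{X}$ (which is in fact the only setting in which the lemma is invoked, in the proof of Theorem \ref{thm 1}), not in the full generality of the cited result. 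Either state that hypothesis explicitly, or fall back on the classical argument you mention at the end when the general statement is needed.
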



\section{Proposed Methods}\label{Se3}
\noindent  In this section we present our proposed methods for solving the MIP \eqref{MVIP}. Throughout this section we assume that the solution set of \eqref{MVIP} is nonempty, that is, $(\mathcal{A}+\mathcal{B})^{-1}(0^*)\neq \emptyset$. We also assume for the rest of this paper that the function $g:\mathbb{X}\to \mathbb{R}\cup \{+\infty\}$ is proper, convex, lower semicontinuous, uniformly Fr\'echet  differentiable, $\gamma$-strongly convex, strongly coercive and Legendre. In addition, we make the following assumptions.

\hrule\hrule	
	\begin{ass}\label{ass1} Let $\mathbb{X}$ be a real reflexive Banach space $\mathbb{X}$, let $\mathbb{X}^*$ denote its dual, and let $\mathcal{A}:\mathbb{X}\to \mathbb{X}^*$ and $\mathcal{B}:\mathbb{X}\to 2^{\mathbb{X}^*}$ be two operators satisfying the following conditions:
\hrule\hrule
\begin{itemize}
\item[(a)] $\mathcal{A}$  is monotone on $\mathbb{X}$ and $\mathcal{B}$ is maximal monotone on $\mathbb{X}$,
\item[(b)]  $\mathcal{A}$ is Lipschitz continuous on $\mathbb{X}$ with constant $L>0$.
		\end{itemize}
\end{ass}

\noindent When $L$ is known, we present the following method for solving the inclusion problem \eqref{MVIP}.
\hrule \hrule
\begin{alg}\label{al1} For arbitrary $v_0,v_1 \in \mathbb{X}$ and $\mu>0$, define the sequence $\{v_n\}$ by
	\hrule \hrule
	 \begin{eqnarray*}\label{a1}
v_{n+1}={Res}^g_{\mu\mathcal{B}}\Big(\nabla g^*(\nabla g(v_n)-\mu(2\mathcal{A}v_n-\mathcal{A}v_{n-1}))\Big),~ n\geq 1.
\end{eqnarray*}
\hrule\hrule
\end{alg}

\hfill

\noindent  When $L$ is unknown, we present the method below with self-adaptive step sizes for solving the inclusion problem \eqref{MVIP}.
\hrule \hrule
\begin{alg}\label{AL1A} Let $\alpha\in (0, 1)$, $\mu_0,\mu_1>0$ and choose a nonnegative real sequence $\{d_n\}$  such that $\sum_{n=1}^{\infty} d_n< \infty.$ For arbitrary $v_0,v_1 \in \mathbb{X}$, let the sequence $\{v_n\}$ be generated by
	\hrule \hrule
	 \begin{eqnarray*}\label{a1A}
v_{n+1}={Res}^g_{\mu_n\mathcal{B}}\Big(\nabla g^*(\nabla g(v_n)-\left((\mu_n+\mu_{n-1})\mathcal{A}v_n-\mu_{n-1}\mathcal{A}v_{n-1}\right))\Big),~ n\geq 1,
\end{eqnarray*}
where
\begin{eqnarray}\label{eq1A}
	\mu_{n+1}=\begin{cases}
	\min \left\{\frac{\alpha\|v_n-v_{n+1}\|_{\mathbb{X}}}{\|\mathcal{A}v_n-\mathcal{A}v_{n+1}\|_{\mathbb{X}^*}},~\mu_{n}+d_{n}\right\}, & \mbox{if}~  \mathcal{A}v_n\neq \mathcal{A}v_{n+1},\\
	\mu_{n}+d_n,& \mbox{otherwise}.
	\end{cases}
	\end{eqnarray}
\hrule\hrule
\end{alg}

\begin{remark} \label{VSS} ${}$
\begin{itemize}
\item Clearly, Algorithms \ref{al1} and \ref{AL1A} only require one evaluation of  $\mathcal{A}$ per iteration, unlike the methods in \cite{LY,  Yekini, Shehu,  Sun} which require two evaluations of $\mathcal{A}$ per iteration.

\item Note that by \eqref{eq1A}, $\lim\limits_{n\to\infty}\mu_n=\mu$, where $\mu\in [\min\{\frac{\alpha}{L}, \mu_1\}, ~\mu_1+d]$ with $d=\sum_{n=1}^\infty d_n$  (see \cite{HY}).

\item When $d_n=0$, then the step size $\mu_n$ in \eqref{eq1A} is similar to the one in \cite{HP, LY, Sun}. We recall that the step size in \cite{HP, LY, Sun} is monotonically decreasing; so their methods may depend on the choice of the initial step size $\mu_1$. However, the step size  given in \eqref{eq1A} is non-monotonic and so the dependence on the initial step size $\mu_1$ is reduced.
 \end{itemize}
\end{remark}

\section{Convergence Results}\label{Se4}
\subsection{Weak Convergence} ${}$\\
\noindent In this subsection we consider the weak convergence of the sequences generated by Algorithms \ref{al1} and \ref{AL1A}. We begin with those
generated by Algorithm \ref{al1}.

\begin{lemma}\label{lem1} Let $\{v_n\}$ be generated by Algorithm \ref{al1} when Assumption \ref{ass1}(a) holds. Then
	\begin{align*}
	D_g(z,v_{n+1})&\leq D_g(z,v_n)+\mu\langle \mathcal{A}v_n-\mathcal{A}v_{n-1},z-v_n\rangle+\mu\langle \mathcal{A}v_n-\mathcal{A}v_{n-1},v_n-v_{n+1}\rangle\\
	&+\mu\langle \mathcal{A}v_n-\mathcal{A}v_{n+1}, z-v_{n+1}\rangle -D_g(v_{n+1},v_n) \hspace{0.2cm}\forall z\in (\mathcal{A}+\mathcal{B})^{-1}(0^*).
	\end{align*}
\end{lemma}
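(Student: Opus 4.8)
The plan is to unwind the definition of the Bregman resolvent and then combine the monotonicity of $\mathcal{B}$ with the three-point identity \eqref{3P}. Set $w_n:=\nabla g^*\big(\nabla g(v_n)-\mu(2\mathcal{A}v_n-\mathcal{A}v_{n-1})\big)$, so that the update of Algorithm \ref{al1} reads $v_{n+1}={Res}^g_{\mu\mathcal{B}}(w_n)=(\nabla g+\mu\mathcal{B})^{-1}\big(\nabla g(w_n)\big)$. Since $g$ is Legendre and strongly coercive, $\nabla g^*=(\nabla g)^{-1}$, hence $\nabla g(w_n)=\nabla g(v_n)-\mu(2\mathcal{A}v_n-\mathcal{A}v_{n-1})$; moreover, by strict convexity of $g$ together with maximal monotonicity of $\mathcal{B}$ (which gives ${ran}_{(\nabla g+\mu\mathcal{B})}=\mathbb{X}^*$), the map $(\nabla g+\mu\mathcal{B})^{-1}\circ\nabla g$ is single-valued and everywhere defined, so $v_{n+1}$ is well defined and
\[
\frac1\mu\big(\nabla g(v_n)-\nabla g(v_{n+1})\big)-\big(2\mathcal{A}v_n-\mathcal{A}v_{n-1}\big)\in\mathcal{B}v_{n+1}.
\]

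Next, fix $z\in(\mathcal{A}+\mathcal{B})^{-1}(0^*)$; then $-\mathcal{A}z\in\mathcal{B}z$. Pairing the monotonicity inequality of $\mathcal{B}$ at the points $v_{n+1}$ and $z$ and multiplying through by $\mu>0$ gives, after rearranging,
\[
\big\langle \nabla g(v_n)-\nabla g(v_{n+1}),\,z-v_{n+1}\big\rangle\le \mu\big\langle 2\mathcal{A}v_n-\mathcal{A}v_{n-1}-\mathcal{A}z,\,z-v_{n+1}\big\rangle.
\]
On the other hand, the three-point identity \eqref{3P}, applied with the points $z$, $v_{n+1}$ and $v_n$, reads
\[
D_g(z,v_{n+1})=D_g(z,v_n)-D_g(v_{n+1},v_n)+\big\langle \nabla g(v_n)-\nabla g(v_{n+1}),\,z-v_{n+1}\big\rangle.
\]
Inserting the previous estimate into this identity yields
\[
D_g(z,v_{n+1})\le D_g(z,v_n)-D_g(v_{n+1},v_n)+\mu\big\langle 2\mathcal{A}v_n-\mathcal{A}v_{n-1}-\mathcal{A}z,\,z-v_{n+1}\big\rangle.
\]

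It then remains only to expand the last inner product. Writing $2\mathcal{A}v_n-\mathcal{A}v_{n-1}-\mathcal{A}z=(\mathcal{A}v_n-\mathcal{A}v_{n-1})+(\mathcal{A}v_n-\mathcal{A}v_{n+1})+(\mathcal{A}v_{n+1}-\mathcal{A}z)$ and, in the first summand, splitting $z-v_{n+1}=(z-v_n)+(v_n-v_{n+1})$, one recovers exactly the three inner-product terms in the claim together with the extra term $\mu\big\langle \mathcal{A}v_{n+1}-\mathcal{A}z,\,z-v_{n+1}\big\rangle=-\mu\big\langle \mathcal{A}v_{n+1}-\mathcal{A}z,\,v_{n+1}-z\big\rangle$, which is nonpositive because $\mathcal{A}$ is monotone; discarding it gives the asserted inequality.

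There is no genuine obstacle here beyond careful bookkeeping: the whole argument is a rearrangement of one monotonicity inequality for $\mathcal{B}$, one application of the three-point identity, and one sign estimate coming from monotonicity of $\mathcal{A}$. The only points that warrant a line of justification are the well-posedness and single-valuedness of the resolvent step (covered by the standing hypotheses on $g$ and by maximal monotonicity of $\mathcal{B}$) and the particular grouping used in the final decomposition, which must be chosen so that the leftover term carries the sign needed for it to be absorbed. Note that only Assumption \ref{ass1}(a) is used; the Lipschitz continuity of $\mathcal{A}$ in Assumption \ref{ass1}(b) plays no role in this lemma.
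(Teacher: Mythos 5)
Your proposal is correct and follows essentially the same route as the paper: the resolvent inclusion $\frac{1}{\mu}\big(\nabla g(v_n)-\mu(2\mathcal{A}v_n-\mathcal{A}v_{n-1})-\nabla g(v_{n+1})\big)\in\mathcal{B}v_{n+1}$, the monotonicity inequality of $\mathcal{B}$ against $-\mathcal{A}z\in\mathcal{B}z$, the three-point identity \eqref{3P}, and the discarding of the nonpositive term $\mu\langle \mathcal{A}v_{n+1}-\mathcal{A}z, z-v_{n+1}\rangle$ via monotonicity of $\mathcal{A}$ (which is exactly the paper's inequality \eqref{M2}), differing only in the order in which these rearrangements are performed.
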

	\begin{proof}
		Let $z\in (\mathcal{A}+\mathcal{B})^{-1}(0^*).$  Then $-\mathcal{A}z\in \mathcal{B}z.$
		
		\noindent By  the definitions of $v_{n+1}$ and ${Res}^g_{\mu\mathcal{B}}$ (see \eqref{RES}), we obtain
		$$v_{n+1}=(\nabla g +\mu \mathcal{B})^{-1}\left(\nabla g(v_n)-\mu(2\mathcal{A}v_n-\mathcal{A}v_{n-1})\right),$$
		which implies that
		\begin{eqnarray}\label{**}
		\frac{1}{\mu}\left(\nabla g(v_n)-\mu (2\mathcal{A}v_n-\mathcal{A}v_{n-1})-\nabla g(v_{n+1})\right)\in B v_{n+1}.
		\end{eqnarray}
		Hence, using the monotonicity of $\mathcal{B}$, we get
		$$0\leq \left\langle \frac{1}{\mu}\left(\nabla g (v_n)-\mu (2\mathcal{A}v_n-\mathcal{A}v_{n-1})-\nabla  g(v_{n+1})\right)+\mathcal{A}z, v_{n+1}-z\right\rangle,$$
	which implies that
	\begin{eqnarray}\label{M1}
	0&\leq & \langle \nabla g (v_{n+1})-\nabla g (v_n)+\mu (2\mathcal{A}v_n-\mathcal{A}v_{n-1})-\mu\mathcal{A}z, z-v_{n+1}\rangle\nonumber\\
	&=&\langle \nabla g (v_{n+1})-\nabla g (v_n), z-v_{n+1}\rangle+\mu \langle \mathcal{A}v_n-\mathcal{A}z, z-v_{n+1}\rangle+\mu \langle \mathcal{A}v_n-\mathcal{A}v_{n-1}, z-v_{n+1}\rangle.
\end{eqnarray}		
Next, using the monotonicity of $\mathcal{A}$, we see that
\begin{eqnarray}\label{M2}
\langle \mathcal{A}v_n-\mathcal{A}z, z-v_{n+1}\rangle\leq \langle \mathcal{A}v_n-\mathcal{A}v_{n+1}, z-v_{n+1}\rangle.
\end{eqnarray}
Now, using \eqref{M2} in \eqref{M1}, and noting equation \eqref{3P}, we obtain
		\begin{align}\label{a2}\nonumber
		0&\leq \langle \nabla g(v_{n+1})-\nabla g(v_n),z-v_{n+1}\rangle +\mu\langle \mathcal{A}v_n-\mathcal{A}v_{n+1},z-v_{n+1}\rangle +\mu\langle \mathcal{A}v_n-\mathcal{A}v_{n-1},z-v_{n}\rangle \\ \nonumber
		& +\mu\langle \mathcal{A}v_n-\mathcal{A}v_{n-1},v_n-v_{n+1}\rangle \\ \nonumber
		&=D_g(z,v_n)-D_g(z,v_{n+1})-D_g(v_{n+1}, v_n)+\mu\langle \mathcal{A}v_n-\mathcal{A}v_{n+1},z-v_{n+1}\rangle +\mu\langle \mathcal{A}v_n-\mathcal{A}v_{n-1},z-v_{n}\rangle \\
		& +\mu\langle \mathcal{A}v_n-\mathcal{A}v_{n-1},v_n-v_{n+1}\rangle,
		\end{align}
which yields the required conclusion.
				\end{proof}

		\begin{theorem}\label{thm 1}
Let  Assumption \ref{ass1} hold and let $\mu\in \Big[\delta,~\frac{\gamma(1-2\delta)}{2L}\Big]$ for some $\delta\in (0, \frac{1}{2})$ and $~ \gamma>0.$ Then the sequence $\{v_n\}$ generated by Algorithm \ref{al1} converges weakly to an element of $(\mathcal{A}+\mathcal{B})^{-1}(0^*).$
		\end{theorem}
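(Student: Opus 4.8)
The plan is to establish weak convergence from the inequality in Lemma~\ref{lem1} by showing that an appropriate energy functional is nonincreasing (up to summable errors), extracting boundedness and asymptotic regularity, and then applying an Opial-type argument.

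First I would massage the three problematic cross terms on the right-hand side of Lemma~\ref{lem1}. The term $\mu\langle \mathcal{A}v_n-\mathcal{A}v_{n+1}, z-v_{n+1}\rangle$ should be rewritten as $\mu\langle \mathcal{A}v_n-\mathcal{A}v_{n+1}, z-v_n\rangle + \mu\langle \mathcal{A}v_n-\mathcal{A}v_{n+1}, v_n-v_{n+1}\rangle$; the first piece has the same shape as the term $\mu\langle \mathcal{A}v_n-\mathcal{A}v_{n-1}, z-v_n\rangle$ but at index $n+1$ rather than $n$, so defining $\Gamma_n := D_g(z,v_n) + \mu\langle \mathcal{A}v_{n-1}-\mathcal{A}v_n, z-v_n\rangle$ turns the estimate into
\begin{align*}
\Gamma_{n+1} \le \Gamma_n - D_g(v_{n+1},v_n) + \mu\langle \mathcal{A}v_n-\mathcal{A}v_{n-1}, v_n-v_{n+1}\rangle + \mu\langle \mathcal{A}v_n-\mathcal{A}v_{n+1}, v_n-v_{n+1}\rangle.
\end{align*}
Now I bound the two inner products using Lipschitz continuity of $\mathcal{A}$, Cauchy--Schwarz and Young's inequality: $\mu\langle \mathcal{A}v_n-\mathcal{A}v_{n-1}, v_n-v_{n+1}\rangle \le \tfrac{\mu L}{2}\|v_{n-1}-v_n\|^2 + \tfrac{\mu L}{2}\|v_n-v_{n+1}\|^2$, and similarly for the last term I get $\mu L\|v_n-v_{n+1}\|^2$ (or split it as well). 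Using the strong convexity bound $D_g(v_{n+1},v_n)\ge \tfrac{\gamma}{2}\|v_{n+1}-v_n\|^2$ from \eqref{17AA}, the recursion becomes, after collecting terms,
\begin{align*}
\Gamma_{n+1} + \Big(\tfrac{\gamma}{2}-\tfrac{\mu L}{2}\Big)\|v_n-v_{n+1}\|^2 \le \Gamma_n + \tfrac{\mu L}{2}\|v_{n-1}-v_n\|^2 - \tfrac{\mu L}{2}\|v_n-v_{n+1}\|^2 + \cdots,
\end{align*}
and the point is to absorb the $\|v_{n-1}-v_n\|^2$ term by a telescoping device: set $\Phi_n := \Gamma_n + \tfrac{\mu L}{2}\|v_{n-1}-v_n\|^2$. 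The hypothesis $\mu \le \tfrac{\gamma(1-2\delta)}{2L}$ is exactly what makes the leftover coefficient of $\|v_n-v_{n+1}\|^2$ strictly positive, so one obtains $\Phi_{n+1} \le \Phi_n - c\|v_n-v_{n+1}\|^2$ for some $c>0$ depending on $\delta,\gamma$.

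The remaining work is then standard. I must check that $\Phi_n$ is bounded below; here the lower bound $\mu\ge\delta$ together with Lipschitz continuity and Young's inequality shows $\mu\langle\mathcal{A}v_{n-1}-\mathcal{A}v_n,z-v_n\rangle \ge -\tfrac{\gamma}{4}\|v_n-v_{n-1}\|^2 - C\|v_n-z\|^2$ type bounds... actually more carefully, one uses $\mu L\|v_{n-1}-v_n\|\,\|z-v_n\| \le \tfrac12 D_g(v_{n-1},v_n)\cdot(\text{something}) $; the cleaner route is to show directly that $\Gamma_n + \tfrac{\mu L}{2}\|v_{n-1}-v_n\|^2 \ge \tfrac12 D_g(z,v_n) \ge 0$ by absorbing the cross term into $\tfrac{\gamma}{4}\|v_n-z\|^2 \le \tfrac12 D_g(z,v_n)$ and $\tfrac{\mu L}{2}\|v_{n-1}-v_n\|^2$. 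Given this, $\{\Phi_n\}$ converges, $\sum\|v_n-v_{n+1}\|^2 < \infty$ (so $\|v_n-v_{n+1}\|\to 0$), and $\{D_g(z,v_n)\}$ converges for every $z\in(\mathcal{A}+\mathcal{B})^{-1}(0^*)$; by Lemma~\ref{2.17}/Remark~\ref{CONT}(ii) the sequence $\{v_n\}$ is bounded. Take a weak cluster point $v^\dagger$ along a subsequence $v_{n_k}\rightharpoonup v^\dagger$. From \eqref{**}, $\tfrac1\mu(\nabla g(v_n)-\nabla g(v_{n+1})) - 2\mathcal{A}v_n+\mathcal{A}v_{n-1} \in \mathcal{B}v_{n+1}$, hence $\tfrac1\mu(\nabla g(v_n)-\nabla g(v_{n+1})) + \mathcal{A}v_{n+1} - 2\mathcal{A}v_n + \mathcal{A}v_{n-1} \in (\mathcal{A}+\mathcal{B})v_{n+1}$; since $\|v_n-v_{n+1}\|\to0$ forces (via uniform Fréchet differentiability of $g$, so $\nabla g$ is uniformly continuous on bounded sets, and via Lipschitz continuity of $\mathcal{A}$) the left-hand side to tend to $0^*$ strongly, and since $\mathcal{A}+\mathcal{B}$ is maximal monotone by Lemma~\ref{le5} and therefore demiclosed (its graph is weakly-strongly closed), we conclude $0^*\in(\mathcal{A}+\mathcal{B})v^\dagger$, i.e. $v^\dagger$ is a solution. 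Finally, uniqueness of the weak cluster point follows from the standard Opial argument: if two subsequences converged weakly to distinct solutions $p\ne q$, then $\lim D_g(p,v_n)$ and $\lim D_g(q,v_n)$ both exist, and Lemma~\ref{Opial} applied twice gives $\lim D_g(p,v_n) < \lim D_g(q,v_n) < \lim D_g(p,v_n)$, a contradiction; hence $v_n\rightharpoonup v^\dagger$.

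The main obstacle I anticipate is the bookkeeping in the first step: choosing the right Lyapunov functional $\Phi_n$ so that the index-$n-1$ difference term telescopes against the index-$n$ term while the surviving coefficient of $\|v_n-v_{n+1}\|^2$ stays positive under precisely the stated range $\mu\in[\delta,\frac{\gamma(1-2\delta)}{2L}]$, and simultaneously verifying that $\Phi_n$ is bounded below (this is where $\mu\ge\delta$ enters, preventing the cross term $\mu\langle\mathcal{A}v_{n-1}-\mathcal{A}v_n,z-v_n\rangle$ from overwhelming $D_g(z,v_n)$). Everything after that — asymptotic regularity, boundedness, demiclosedness of $\mathcal{A}+\mathcal{B}$, and the Opial dichotomy — is routine given the lemmas already recorded in Section~\ref{Se2}.
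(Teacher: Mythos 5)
Your overall architecture (a Lyapunov/quasi-Fej\'er argument built on Lemma \ref{lem1}, asymptotic regularity, identification of weak cluster points via \eqref{**} and the maximal monotonicity of $\mathcal{A}+\mathcal{B}$ from Lemma \ref{le5}, and uniqueness of the weak limit via Lemma \ref{Opial}) is exactly the paper's, but the key bookkeeping step --- the one you yourself flag as the main obstacle --- is not carried out correctly, and this is a genuine gap. With your definition $\Gamma_n = D_g(z,v_n)+\mu\langle \mathcal{A}v_{n-1}-\mathcal{A}v_n,\,z-v_n\rangle$, the claimed one-step inequality does not follow from Lemma \ref{lem1}: in that lemma the term $\mu\langle \mathcal{A}v_n-\mathcal{A}v_{n+1},\,z-v_{n+1}\rangle$ appears on the right-hand side with a plus sign, so transposing it to the left yields $+\mu\langle \mathcal{A}v_{n+1}-\mathcal{A}v_n,\,z-v_{n+1}\rangle$; the cross term must therefore enter the energy with the sign opposite to the one you chose. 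With your sign the two $z$-dependent terms add rather than telescope, and the displayed recursion for $\Gamma_n$ is false in general. The correct functional is the paper's $s_n = D_g(z,v_n)+\mu\langle \mathcal{A}v_n-\mathcal{A}v_{n-1},\,z-v_n\rangle+\tfrac12 D_g(v_n,v_{n-1})$: no splitting of the last cross term is needed at all (it is, once transposed, precisely the index-$(n+1)$ copy of the cross term in $s_n$), only the single term $\mu\langle \mathcal{A}v_n-\mathcal{A}v_{n-1},\,v_n-v_{n+1}\rangle$ is estimated via Lipschitz continuity, Young's inequality and \eqref{17AA}, and $\mu L\gamma^{-1}\le \tfrac12-\delta$ then gives $s_{n+1}\le s_n-\delta D_g(v_{n+1},v_n)$ together with $s_n\ge \tfrac12 D_g(z,v_n)\ge 0$.

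Moreover, even if you repair the sign and carry out your splitting consistently (write $\mu\langle \mathcal{A}v_n-\mathcal{A}v_{n+1},\,z-v_{n+1}\rangle=\mu\langle \mathcal{A}v_n-\mathcal{A}v_{n+1},\,z-v_{n}\rangle+\mu\langle \mathcal{A}v_n-\mathcal{A}v_{n+1},\,v_n-v_{n+1}\rangle$ and keep the first piece inside the energy), the second piece is nonnegative by monotonicity and can only be bounded above by $\mu L\|v_n-v_{n+1}\|^2$; after absorbing all squared terms into a functional of the form $\Gamma_n+\theta\|v_{n-1}-v_n\|^2$, the surviving coefficient of $\|v_n-v_{n+1}\|^2$ is positive only for roughly $\mu<\gamma/(4L)$ at best, which does not cover the stated interval $\mu\in\bigl[\delta,\ \gamma(1-2\delta)/(2L)\bigr]$. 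A further small inaccuracy: the lower bound $\mu\ge\delta$ is not what keeps the energy bounded below --- nonnegativity of $s_n$ uses only the upper bound $\mu L\gamma^{-1}\le\tfrac12$ (a lower bound on $\mu$ makes the cross term larger, not smaller). The remaining steps of your plan (boundedness via Lemma \ref{2.17}, $\|v_{n+1}-v_n\|\to 0$, the weak--strong closedness argument for $\mathcal{A}+\mathcal{B}$, existence of $\lim_n D_g(z,v_n)$ for each solution $z$, and the Opial dichotomy) are correct and coincide with the paper's proof.
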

	\begin{proof}
				Using the Lipschitz continuity of $\mathcal{A}$ and \eqref{17AA}, we obtain
	\begin{align}\label{a3}\nonumber
	\mu\langle \mathcal{A}v_n-\mathcal{A}v_{n-1},v_n-v_{n+1}\rangle&\leq \mu L\|v_n-v_{n-1}\|\|v_n-v_{n+1}\|\\ \nonumber
	&\leq \frac{\mu L}{2}\Big(\|v_n-v_{n-1}\|^2+\|v_{n+1}-v_n\|^2\Big)\\
	&\leq \mu L \gamma^{-1}\Big(D_g(v_n,v_{n-1})+D_g(v_{n+1},v_n)\Big).
	\end{align}
Since $\mu\leq\frac{\gamma(1-2\delta)}{2L},$ we get that $\mu L\gamma^{-1}\leq \frac{1}{2}-\frac{2\delta}{2}<\frac{1}{2},$ which further gives that $\mu L\gamma^{-1}-1\leq -\left(\frac{1}{2}+\delta\right)$. Using these inequalities and \eqref{a3} in Lemma \ref{lem1}, we see that
	\begin{align}\label{a5}\nonumber
	D_g(z,v_{n+1})&\leq D_g(z,v_n)+\mu\langle \mathcal{A}v_n-\mathcal{A}v_{n-1},z-v_n\rangle +\mu L\gamma^{-1}D_g(v_n,v_{n-1})\\ \nonumber
	&+\mu \langle \mathcal{A}v_n-\mathcal{A}v_{n+1},z-v_{n+1}\rangle +(\mu L\gamma^{-1}-1)D_g(v_{n+1},v_n)\\ \nonumber
	&\leq D_g(z,v_n)+\mu\langle \mathcal{A}v_n-\mathcal{A}v_{n-1},z-v_n\rangle +\frac{1}{2}D_g(v_n,v_{n-1})\\
				&+\mu\langle \mathcal{A}v_n-\mathcal{A}v_{n+1},z-v_{n+1}\rangle -\Big(\frac{1}{2}+\delta\Big)D_g(v_{n+1},v_n).
				\end{align}
Now, for $n\geq 1$, let
\begin{eqnarray*}
s_n&=&D_g(z,v_n)+\mu\langle \mathcal{A}v_n-\mathcal{A}v_{n-1},z-v_n\rangle +\frac{1}{2}D_g(v_n,v_{n-1}),\\
t_n&=&\delta D_g(v_{n+1},v_n).
\end{eqnarray*}
Then \eqref{a5} can be rewritten as
\begin{eqnarray}\label{4.8*}
s_{n+1}\leq s_{n}-t_n~\forall n\geq 1.
\end{eqnarray}
We have $s_n\geq 0 \; ~\forall n\geq 1$, because
\begin{eqnarray}\label{4.8**}
s_n&= &D_g(z,v_n)+\mu\langle \mathcal{A}v_n-\mathcal{A}v_{n-1},z-v_n\rangle +\frac{1}{2}D_g(v_n,v_{n-1}) \nonumber\\
&\geq &D_g(z,v_n)-\mu L \gamma^{-1}\left(D_g(v_n, v_{n-1})+D_g(z,v_n)\right)+\frac{1}{2}D_g(v_n,v_{n-1}) \nonumber\\
&\geq &D_g(z,v_n)-\frac{1}{2}\left(D_g(v_n, v_{n-1})+D_g(z,v_n)\right)+\frac{1}{2}D_g(v_n,v_{n-1}) \nonumber\\
&=&\frac{1}{2}D_g(z,v_n)\geq 0.
\end{eqnarray}
Hence, \eqref{4.8*} implies that the sequence $\{s_n\}$ is bounded and that $\lim\limits_{n\rightarrow \infty} t_n=0.$ Since $\{s_n\}$ is bounded, it follows from \eqref{4.8**} that  $\{D_g(z,v_n)\}$ is also bounded, and using Lemma \ref{2.17}, we get that the sequence $\{v_n\}$ is bounded. Let $u$ be a weak cluster point of $\{v_n\}.$
Then we can choose a subsequence of $\{v_n\},$ denoted by $\{v_{n_j}\}$, such that $v_{n_j}\rightharpoonup u.$
Once again,	since  $\lim\limits_{n\rightarrow \infty}D_g(v_{n+1},v_n)=\frac{1}{\delta}\lim\limits_{n\rightarrow \infty}t_n=0,$  we obtain that $\lim\limits_{n\rightarrow \infty}\|v_{n+1}-v_n\|=0$ (by inequality \eqref{17AA}). Since $g$ is strongly coercive, we get that $\lim\limits_{n\rightarrow \infty}\|\nabla g(v_{n+1})-\nabla g(v_n)\|=0$ (because if $g$ is uniformly Fr\'echet differentiable, then $\nabla g^*$ is uniformly continuous on bounded subsets of $\mathbb{X}^*$). Also, since $\mathcal{A}$ is Lipschitz continuous (hence uniformly continuous), we obtain that $\lim\limits_{n\rightarrow \infty}\|\mathcal{A}v_n-\mathcal{A}v_{n-1}\|=0.$	

\noindent Now, consider $(v, w)\in G(\mathcal{A}+\mathcal{B})$. Then $w-\mathcal{A}v\in \mathcal{B}v$. Using this, \eqref{**} and the monotonicity of $\mathcal{B}$, we get that
\begin{eqnarray}\label{a12}
\langle w - \mathcal{A}v-\frac{1}{\mu}\left(\nabla g (v_{n_j})-\mu (2\mathcal{A}v_{n_j}-\mathcal{A}v_{n_j-1})-\nabla g(v_{n_j+1})\right), v - v_{n_j+1}\rangle \geq 0.
\end{eqnarray}
Using  \eqref{a12} and the monotonicity of $\mathcal{A}$, we obtain that
\begin{eqnarray}\label{a13}
\langle w, v-v_{n_j+1}\rangle &\geq& \langle \mathcal{A}v+\frac{1}{\mu}\left(\nabla g(v_{n_j})-\mu(2\mathcal{A}v_{n_j}-\mathcal{A}v_{n_j-1})-\nabla g(v_{n_j+1})\right), v - v_{n_j+1}\rangle\nonumber\\
&=& \langle \mathcal{A}v-\mathcal{A}v_{n_j+1}, v - v_{n_j+1}\rangle + \langle \mathcal{A}v_{n_j+1}-\mathcal{A}v_{n_j}, v - v_{n_j+1}\rangle\nonumber\\
&& +\langle \mathcal{A}v_{n_j-1}-\mathcal{A}v_{n_j}, v - v_{n_j+1}\rangle + \frac{1}{\mu}\langle \nabla g(v_{n_j})-\nabla g(v_{n_j+1}), v - v_{n_j+1}\rangle\nonumber\\
&\geq &   \langle \mathcal{A}v_{n_j+1}-\mathcal{A}v_{n_j}, v - v_{n_j+1}\rangle +\langle \mathcal{A}v_{n_j-1}-\mathcal{A}v_{n_j}, v - v_{n_j+1}\rangle\nonumber\\
&&  + \frac{1}{\mu}\langle \nabla g(v_{n_j})-\nabla g(v_{n_j+1}), v - v_{n_j+1}\rangle.
\end{eqnarray}
Passing to the limit as $j\to \infty$ in \eqref{a13}, we see that $\langle w, v-u\rangle\geq 0.$ Thus, using the maximal monotonicity of $\mathcal{A}+\mathcal{B}$ (see Lemma \ref{le5}), we conclude that $u\in (\mathcal{A}+\mathcal{B})^{-1}(0^*)$.

\noindent We now show that $\{v_n\}$ converges weakly to $u$. It follows from \eqref{4.8*} that the sequence $\{s_n\}$ is monotone for all $z\in (\mathcal{A}+\mathcal{B})^{-1}(0^*)$, and since it is also bounded for all $z\in (\mathcal{A}+\mathcal{B})^{-1}(0^*)$, we get that
\begin{eqnarray}\label{***}
\lim_{n\to\infty}\left(D_g(u, v_n)+\mu\langle \mathcal{A}v_n-\mathcal{A}v_{n-1}, u-v_n\rangle+\frac{1}{2}D_g(v_n, v_{n-1})\right)~~\mbox{exists}.
\end{eqnarray}
Since $\{v_n\}$ is bounded, $D_g(v_{n},v_{n-1})\to 0$ as $n\to \infty$, and $\mathcal{A}$ is Lipschitz continuous, it follows from  \eqref{***} that $\lim\limits_{n\to\infty} D_g(u, v_n)$ exists.\\
Next, we show that $u$ is unique. Suppose to the contrary that this is not true. Then there exists a subsequence $\{v_{n_i}\}\subset\{v_n\}$ such that $v_{n_i}\rightharpoonup \bar{u}$ with $u\neq \bar{u}$. Note that we can again show that $\bar{u}\in (\mathcal{A}+\mathcal{B})^{-1}(0^*)$. Using Lemma \ref{Opial}, we get
\begin{eqnarray*}
\lim_{n\to \infty} D_g(u, v_n)&=&\limsup_{j\to\infty}D_g(u, v_{n_j})<\limsup_{j\to \infty} D_g(\bar{u}, v_{n_j})\\
&=&\lim_{n\to\infty} D_g(\bar{u}, v_n)=\limsup_{i\to\infty} D_g(\bar{u}, v_{n_i})\\
&<& \limsup_{i\to\infty} D_g(u, v_{n_i})=\lim_{n\to\infty} D_g(u, v_n),
\end{eqnarray*}
which is a contradiction. The contradiction we have reached shows that $u=\bar{u}$. Therefore $u$ is indeed unique, as claimed. Thus the whole sequence $\{v_n\}$ converges weakly to an element of $(\mathcal{A}+\mathcal{B})^{-1}(0^*)$, as asserted.
		\end{proof}
	
 We now turn to Algorithm \ref{AL1A} and establish a weak convergence theorem for it.
		\begin{theorem}\label{thm 1A}
Let  Assumption \ref{ass1}  hold and let $\alpha\in \Big(\delta,~\frac{\gamma(1-2\delta)}{2}\Big)$ for some $\delta\in (0, \frac{1}{2})$ and $~ \gamma>0.$ Then the sequence $\{v_n\}$ generated by Algorithm \ref{AL1A} converges weakly to an element of $(\mathcal{A}+\mathcal{B})^{-1}(0^*).$
		\end{theorem}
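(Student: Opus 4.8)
The plan is to mirror the proof of Theorem \ref{thm 1} almost line by line, the one genuinely new feature being the control of the variable step sizes. Recall from Remark \ref{VSS} (see \cite{HY}) that $\{\mu_n\}$ converges to some $\mu\in[\min\{\alpha/L,\mu_1\},\mu_1+d]$ with $d=\sum_{n=1}^{\infty}d_n$; in particular there are constants $0<c\le\mu_n\le C$ valid for all $n$ (the lower bound $c:=\min\{\alpha/L,\mu_1\}$ is immediate from \eqref{eq1A} and the $L$-Lipschitz continuity of $\mathcal{A}$, the upper bound $C:=\mu_1+d$ from $\mu_{n+1}\le\mu_n+d_n$ and summation). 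First I would establish the analogue of Lemma \ref{lem1}: rewriting the argument of the resolvent as $\nabla g(v_n)-\mu_n\mathcal{A}v_n-\mu_{n-1}(\mathcal{A}v_n-\mathcal{A}v_{n-1})$ and repeating, verbatim, the chain of estimates used for Lemma \ref{lem1} (definition of ${Res}^g_{\mu_n\mathcal{B}}$; $-\mathcal{A}z\in\mathcal{B}z$ for $z\in(\mathcal{A}+\mathcal{B})^{-1}(0^*)$; monotonicity of $\mathcal{B}$; monotonicity of $\mathcal{A}$, used to replace $\langle\mathcal{A}v_n-\mathcal{A}z,z-v_{n+1}\rangle$ by $\langle\mathcal{A}v_n-\mathcal{A}v_{n+1},z-v_{n+1}\rangle$; the three-point identity \eqref{3P}; and the splitting of $\langle\mathcal{A}v_n-\mathcal{A}v_{n-1},z-v_{n+1}\rangle$), one obtains
\begin{align*}
D_g(z,v_{n+1})&\le D_g(z,v_n)+\mu_{n-1}\langle\mathcal{A}v_n-\mathcal{A}v_{n-1},z-v_n\rangle+\mu_{n-1}\langle\mathcal{A}v_n-\mathcal{A}v_{n-1},v_n-v_{n+1}\rangle\\
&\quad+\mu_n\langle\mathcal{A}v_n-\mathcal{A}v_{n+1},z-v_{n+1}\rangle-D_g(v_{n+1},v_n),\qquad z\in(\mathcal{A}+\mathcal{B})^{-1}(0^*).
\end{align*}

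To handle the cross term I would use the step-size rule \eqref{eq1A}, which gives $\mu_n\|\mathcal{A}v_{n-1}-\mathcal{A}v_n\|\le\alpha\|v_{n-1}-v_n\|$ for every $n$; Young's inequality and \eqref{17AA} then bound $\mu_{n-1}\langle\mathcal{A}v_n-\mathcal{A}v_{n-1},v_n-v_{n+1}\rangle$ by $\theta_n\bigl(D_g(v_n,v_{n-1})+D_g(v_{n+1},v_n)\bigr)$ with $\theta_n:=\alpha\mu_{n-1}/(\gamma\mu_n)$. Since $\mu_{n-1}/\mu_n\to1$ and $\alpha/\gamma<(1-2\delta)/2=\tfrac12-\delta$, there is $N$ with $\theta_n<\tfrac12-\tfrac{\delta}{2}$, hence $\theta_n+\theta_{n+1}-1<-\delta$, for all $n\ge N$. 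For such $n$, setting
\[
s_n:=D_g(z,v_n)+\mu_{n-1}\langle\mathcal{A}v_n-\mathcal{A}v_{n-1},z-v_n\rangle+\theta_nD_g(v_n,v_{n-1}),\qquad t_n:=\delta D_g(v_{n+1},v_n),
\]
the computation of \eqref{4.8*} carries over and gives $s_{n+1}\le s_n-t_n$, while the computation of \eqref{4.8**} gives $s_n\ge(1-\theta_n)D_g(z,v_n)\ge\tfrac12D_g(z,v_n)\ge0$. Consequently $\{s_n\}$ is bounded and $\sum t_n<\infty$, so $D_g(v_{n+1},v_n)\to0$; then $\{D_g(z,v_n)\}$ is bounded, Lemma \ref{2.17} gives that $\{v_n\}$ is bounded, \eqref{17AA} gives $\|v_{n+1}-v_n\|\to0$, the uniform Fr\'echet differentiability of $g$ gives $\|\nabla g(v_{n+1})-\nabla g(v_n)\|\to0$, and the Lipschitz continuity of $\mathcal{A}$ gives $\|\mathcal{A}v_n-\mathcal{A}v_{n-1}\|\to0$.

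The remainder copies the proof of Theorem \ref{thm 1}. For a weak cluster point $u$ of $\{v_n\}$, say $v_{n_j}\rightharpoonup u$, and any $(v,w)\in G(\mathcal{A}+\mathcal{B})$ (so $w-\mathcal{A}v\in\mathcal{B}v$), I would combine this with the inclusion playing the role of \eqref{**} and the monotonicity of $\mathcal{B}$ and $\mathcal{A}$ to obtain, after multiplying by $\mu_{n_j}>0$,
\[
\mu_{n_j}\langle w,v-v_{n_j+1}\rangle\ge\mu_{n_j}\langle\mathcal{A}v_{n_j+1}-\mathcal{A}v_{n_j},v-v_{n_j+1}\rangle-\mu_{n_j-1}\langle\mathcal{A}v_{n_j}-\mathcal{A}v_{n_j-1},v-v_{n_j+1}\rangle+\langle\nabla g(v_{n_j})-\nabla g(v_{n_j+1}),v-v_{n_j+1}\rangle.
\]
The right-hand side tends to $0$ (since $\mu_n\le C$ and all three of $\|\mathcal{A}v_{n_j+1}-\mathcal{A}v_{n_j}\|$, $\|\mathcal{A}v_{n_j}-\mathcal{A}v_{n_j-1}\|$, $\|\nabla g(v_{n_j})-\nabla g(v_{n_j+1})\|$ tend to $0$), while the left-hand side tends to $\mu\langle w,v-u\rangle$ because $\mu_{n_j}\to\mu$ and $v_{n_j+1}\rightharpoonup u$; as $\mu>0$ this yields $\langle w,v-u\rangle\ge0$, so $u\in(\mathcal{A}+\mathcal{B})^{-1}(0^*)$ by the maximal monotonicity of $\mathcal{A}+\mathcal{B}$ (Lemma \ref{le5}). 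Taking $z=u$, the boundedness of $\{v_n\}$ together with $D_g(v_n,v_{n-1})\to0$, $\|\mathcal{A}v_n-\mathcal{A}v_{n-1}\|\to0$ and $\mu_{n-1}\le C$ forces $\lim_nD_g(u,v_n)$ to exist, and Lemma \ref{Opial} then rules out a second weak cluster point, so $\{v_n\}$ converges weakly to $u\in(\mathcal{A}+\mathcal{B})^{-1}(0^*)$. The only real obstacle relative to Theorem \ref{thm 1} is that the constant $\mu L\gamma^{-1}$ is now the variable $\theta_n$: this is dealt with by localizing to $n\ge N$, where $\theta_n$ is already within $\delta/2$ of its limit $\alpha/\gamma<\tfrac12-\delta$, and by carrying both $\mu_n$ and $\mu_{n-1}$ through the Lyapunov quantity $s_n$ and through the cluster-point estimate, which requires nothing beyond $\mu_n\to\mu\in(0,\infty)$ and the uniform bounds $c\le\mu_n\le C$.
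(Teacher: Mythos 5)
Your proposal is correct and follows essentially the same route as the paper: the variable-step analogue of Lemma \ref{lem1}, the bound on the cross term via the step-size rule, Young's inequality and \eqref{17AA} with the ratio $\mu_{n-1}/\mu_n\to 1$ used to fix an index beyond which the coefficients behave as in the constant-step case, then the same quasi-Fej\'er/Lyapunov argument, cluster-point identification through Lemma \ref{le5}, and uniqueness via Lemma \ref{Opial}. The only (harmless) deviation is bookkeeping: you keep the variable coefficient $\theta_n=\alpha\mu_{n-1}/(\gamma\mu_n)$ inside $s_n$ with the threshold $\tfrac12-\tfrac{\delta}{2}$, whereas the paper replaces it by the constant $\tfrac12$ after observing $\theta_n<\tfrac12-\delta$ for $n\ge n_0$; you also spell out the tail (boundedness of $\{\mu_n\}$, $\mu_{n_j}\to\mu>0$ in the cluster-point estimate) that the paper only sketches.
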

	\begin{proof}
Let $z\in (\mathcal{A}+\mathcal{B})^{-1}(0^*).$ Following a method of proof which is similar to that of the proof of Lemma \ref{lem1}, we obtain that
	\begin{align}\label{lem1A}\nonumber
	D_g(z,v_{n+1})&\leq D_g(z,v_n)+\mu_{n-1}\langle \mathcal{A}v_n-\mathcal{A}v_{n-1},z-v_n\rangle+\mu_{n-1}\langle \mathcal{A}v_n-\mathcal{A}v_{n-1},v_n-v_{n+1}\rangle\\
	&+\mu_n\langle \mathcal{A}v_n-\mathcal{A}v_{n+1}, z-v_{n+1}\rangle -D_g(v_{n+1},v_n).
	\end{align}
	From \eqref{eq1A} and \eqref{17AA} it follows that
	\begin{align}\label{a3A}\nonumber
	\mu_{n-1}\langle \mathcal{A}v_n-\mathcal{A}v_{n-1},v_n-v_{n+1}\rangle&\leq \mu_{n-1} \|\mathcal{A}v_n-\mathcal{A}v_{n-1}\|\|v_n-v_{n+1}\|\\ \nonumber
	&\leq \frac{\mu_{n-1}}{\mu_n} \alpha \|v_n-v_{n-1}\|\|v_n-v_{n+1}\|\\ \nonumber
	&\leq \frac{\mu_{n-1}}{\mu_n}\frac{\alpha}{2}\Big(\|v_n-v_{n-1}\|^2+\|v_{n+1}-v_n\|^2\Big)\\
	&\leq \frac{\mu_{n-1}}{\mu_n} \alpha \gamma^{-1}\Big(D_g(v_n,v_{n-1})+D_g(v_{n+1},v_n)\Big).
	\end{align}
				By Remark \ref{VSS}(ii) and the condition $\alpha\in \Big(\delta,~\frac{\gamma(1-2\delta)}{2}\Big),$ we have that $\lim\limits_{n\to\infty}\left(1-\frac{\mu_{n}}{\mu_{n+1}}\alpha \gamma^{-1}\right)=1-\alpha\gamma^{-1}>\frac{1}{2}+\delta$. Hence, there exists $n_0\geq 1$ such that $1-\frac{\mu_{n}}{\mu_{n+1}}\alpha \gamma^{-1}>\frac{1}{2}+\delta~\forall n\geq n_0$; which further implies that $\frac{\mu_{n}}{\mu_{n+1}}\alpha \gamma^{-1}<\frac{1}{2}-\delta~\forall n\geq n_0$. Hence, \eqref{a3A} becomes
		\begin{align}\label{a4A***}
			\mu_{n-1}	\langle \mathcal{A}v_{n}-\mathcal{A}v_{n-1},v_n-v_{n+1}\rangle 				&\leq \left(\frac{1}{2}-\delta\right)\Big(D_g(v_{n},v_{n-1})+D_g(v_{n+1},v_{n})\Big)~\forall n\geq n_0.
				\end{align}
Using \eqref{a4A***} in \eqref{lem1A}, we obtain								\begin{align}\label{a5A}\nonumber
				D_g(z,v_{n+1})&\leq D_g(z,v_n)+\mu_{n-1}\langle \mathcal{A}v_n-\mathcal{A}v_{n-1},z-v_n\rangle +\Big(\frac{1}{2}-\delta \Big)D_g(v_n,v_{n-1})\\ \nonumber
				&+\mu_n\langle \mathcal{A}v_n-\mathcal{A}v_{n+1},z-v_{n+1}\rangle -\Big(\frac{1}{2}+\delta\Big)D_g(v_{n+1},v_n)\\ \nonumber
				&\leq D_g(z,v_n)+\mu_{n-1}\langle \mathcal{A}v_n-\mathcal{A}v_{n-1},z-v_n\rangle +\frac{1}{2}D_g(v_n,v_{n-1})\\
				&+\mu_n\langle \mathcal{A}v_n-\mathcal{A}v_{n+1},z-v_{n+1}\rangle -\Big(\frac{1}{2}+\delta\Big)D_g(v_{n+1},v_n)~\forall n\geq n_0.
				\end{align}
For $n\geq n_0$, let
\begin{eqnarray*}
s_n&=&D_g(z,v_n)+\mu_{n-1}\langle \mathcal{A}v_n-\mathcal{A}v_{n-1},z-v_n\rangle +\frac{1}{2}D_g(v_n,v_{n-1}),\\
t_n&=&\delta D_g(v_{n+1},v_n).
\end{eqnarray*}
Then \eqref{a5A} can be rewritten as
\begin{eqnarray}\label{4.19*}
s_{n+1}\leq s_{n}-t_n~\forall n\geq n_0.
\end{eqnarray}
Using an argument similar to the one used regarding \eqref{4.8**}, we obtain that $s_n\geq 0 \; ~\forall n\geq n_0$. Hence, again following the arguments used concerning \eqref{4.8**} and noting that the sequence $\{\mu_n\}$ is bounded, we can show that $\{v_n\}$ indeed converges weakly to an element of $(\mathcal{A}+\mathcal{B})^{-1}(0^*),$
as asserted.
		\end{proof}

\begin{remark}
Theorem \ref{thm 1} and Theorem \ref{thm 1A} extend Theorem 2.5 and Theorem 3.4 of \cite{Malitsky2}, respectively, from Hilbert space to all reflexive Banach spaces.

\end{remark}

\hfill

\noindent If we set $\mathcal{A}= 0$ in Algorithm \ref{al1}, then we obtain the following result concerning the solution of the NPP \eqref{NPP}
as a corollary of Theorem \ref{thm 1}.

		\begin{cor}\label{COR1}
Let  Assumption \ref{ass1} hold and let $\mu>0$. For arbitrary $v_1 \in \mathbb{X}$, let the sequence $\{v_n\}$ be generated by
		 \begin{eqnarray*}
v_{n+1}={Res}^g_{\mu\mathcal{B}}(v_n),~ n\geq 1.
\end{eqnarray*}
Then $\{v_n\}$  converges weakly to an element of $\mathcal{B}^{-1}(0^*).$
		\end{cor}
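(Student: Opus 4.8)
The plan is to deduce this directly from Theorem~\ref{thm 1}, after observing that setting $\mathcal{A}=0$ collapses Algorithm~\ref{al1} to the stated recursion and renders the step-size restriction vacuous.

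First I would record the reduction. With $\mathcal{A}\equiv 0^*$ we have $2\mathcal{A}v_n-\mathcal{A}v_{n-1}=0^*$, so the update in Algorithm~\ref{al1} becomes $v_{n+1}={Res}^g_{\mu\mathcal{B}}\big(\nabla g^*(\nabla g(v_n))\big)$. Since $g$ is Legendre and strongly coercive, Remark~\ref{CONT}(iii) gives $\nabla g^*=(\nabla g)^{-1}$, hence $\nabla g^*(\nabla g(v_n))=v_n$ and the recursion is exactly $v_{n+1}={Res}^g_{\mu\mathcal{B}}(v_n)$; in particular $v_0$ no longer appears, so an arbitrary starting point $v_1\in\mathbb{X}$ suffices. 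Moreover $(\mathcal{A}+\mathcal{B})^{-1}(0^*)=\mathcal{B}^{-1}(0^*)$, which is nonempty by the standing assumption of Section~\ref{Se3}.

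Next I would check the hypotheses of Theorem~\ref{thm 1}. The zero operator is monotone and is $L$-Lipschitz continuous for \emph{every} $L>0$, while $\mathcal{B}$ is maximal monotone by assumption; thus Assumption~\ref{ass1} holds and we are free to assign $\mathcal{A}$ a Lipschitz constant as small as we please. Given the prescribed $\mu>0$ and the strong-convexity constant $\gamma>0$ of $g$, choose $\delta\in(0,\tfrac12)$ with $\delta\le\mu$ (for instance $\delta=\min\{\mu/2,\,1/4\}$, so that $1-2\delta\ge\tfrac12>0$), and then fix $L>0$ small enough that $\mu\le\frac{\gamma(1-2\delta)}{2L}$; this is possible because $\frac{\gamma(1-2\delta)}{2L}\to+\infty$ as $L\to 0^+$. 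With this value of $L$ attached to $\mathcal{A}\equiv 0^*$, we have $\mu\in\big[\delta,\ \frac{\gamma(1-2\delta)}{2L}\big]$, so Theorem~\ref{thm 1} applies and yields the weak convergence of $\{v_n\}$ to an element of $(\mathcal{A}+\mathcal{B})^{-1}(0^*)=\mathcal{B}^{-1}(0^*)$.

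There is essentially no obstacle here; the only subtlety is recognising that the upper bound on $\mu$ in Theorem~\ref{thm 1} involves the (here arbitrary) Lipschitz constant $L$, so the constraint on $\mu$ disappears. Alternatively, and perhaps more transparently, one may re-examine the proof of Theorem~\ref{thm 1} with every term containing $\mathcal{A}$ set to $0^*$: Lemma~\ref{lem1} then reduces to $D_g(z,v_{n+1})\le D_g(z,v_n)-D_g(v_{n+1},v_n)$ for every $z\in\mathcal{B}^{-1}(0^*)$, and the Fej\'er-monotonicity argument of Theorem~\ref{thm 1} — now with $s_n=D_g(z,v_n)$ and $t_n=D_g(v_{n+1},v_n)$ — together with Lemma~\ref{2.17}, Lemma~\ref{Opial} and the maximal monotonicity of $\mathcal{B}$, goes through verbatim with no restriction on $\mu>0$ whatsoever.
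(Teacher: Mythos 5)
Your proposal is correct and follows essentially the paper's route: the paper obtains this statement directly as a corollary of Theorem \ref{thm 1} by setting $\mathcal{A}=0$ in Algorithm \ref{al1}, exactly as you do. Your explicit observation that the zero operator is $L$-Lipschitz for every $L>0$, so that the window $\big[\delta,\ \tfrac{\gamma(1-2\delta)}{2L}\big]$ can be arranged to contain any prescribed $\mu>0$ (making the step-size restriction vacuous), supplies the justification the paper leaves implicit when it asserts the corollary for arbitrary $\mu>0$.
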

\begin{remark}
We can replace $\mu$  with $\{\mu_n\}$ in Corollary \ref{COR1} to obtain a corollary of Theorem \ref{thm 1A}.
	\end{remark}
\noindent If we set $\mathcal{B}=N_\mathcal{C}$ in Algorithms \ref{al1} and \ref{AL1A} (where $N_\mathcal{C}$ is as defined in \eqref{NC}), we get $\mbox{Res}_{\mu \mathcal{B}}^g={Proj}^g_\mathcal{C}$. Hence, we obtain the following new results as corollaries of Theorem \ref{thm 1} and  Theorem \ref{thm 1A}, respectively.
These results concern the following variational inequality problem: $\mbox{Find} ~v\in \mathcal{C} ~\mbox{such that}~ \langle \mathcal{A}v,u-v\rangle \geq 0~\forall  u\in \mathcal{C},$ where $\mathcal{A}:\mathcal{C}\to \mathbb{X}^*$. \\
We denote the solution set of this problem by $VI(\mathcal{C}, \mathcal{A})$.

		\begin{cor}\label{COR3}
Let  Assumption \ref{ass1} hold and let $\mu\in \Big[\delta,~\frac{\gamma(1-2\delta)}{2L}\Big]$ for some $\delta\in (0, \frac{1}{2})$ and $~ \gamma>0.$ For arbitrary $v_0,v_1 \in \mathbb{X}$ and $\mu>0$, let the sequence $\{v_n\}$ be generated by
		 \begin{eqnarray*}
v_{n+1}={Proj}^g_\mathcal{C}\Big(\nabla g^*(\nabla g(v_n)-\mu(2\mathcal{A}v_n-\mathcal{A}v_{n-1}))\Big)~\forall n\geq 1.
\end{eqnarray*}
Then $\{v_n\}$  converges weakly to an element of $VI(\mathcal{C}, \mathcal{A})$.
		\end{cor}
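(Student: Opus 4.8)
The plan is to deduce Corollary \ref{COR3} directly from Theorem \ref{thm 1} by specializing the set-valued operator to $\mathcal{B}=N_{\mathcal{C}}$, so that no new convergence analysis is needed.

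First I would check that the data $(\mathbb{X},\mathcal{A},N_{\mathcal{C}})$ satisfy Assumption \ref{ass1}. Conditions (a) and (b) on $\mathcal{A}$ are part of the present hypotheses, and it is classical that the normal cone operator $N_{\mathcal{C}}$ of a nonempty, closed and convex set $\mathcal{C}$ is maximal monotone; hence Assumption \ref{ass1}(a) holds with $\mathcal{B}:=N_{\mathcal{C}}$. The step-size restriction $\mu\in[\delta,\gamma(1-2\delta)/(2L)]$ is exactly the one in Theorem \ref{thm 1}, and by Lemma \ref{le5} the sum $\mathcal{A}+N_{\mathcal{C}}$ is maximal monotone, which is the property invoked at the weak-cluster-point stage of that proof. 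Thus every hypothesis of Theorem \ref{thm 1} is in force for the pair $(\mathcal{A},N_{\mathcal{C}})$.

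Next I would identify the iteration. For $w\in\mathbb{X}$, by \eqref{RES} one has $\mathrm{Res}^g_{\mu N_{\mathcal{C}}}(w)=(\nabla g+\mu N_{\mathcal{C}})^{-1}(\nabla g(w))$, and $y=(\nabla g+\mu N_{\mathcal{C}})^{-1}(\nabla g(w))$ is characterized by $\langle\nabla g(w)-\nabla g(y),u-y\rangle\le 0$ for all $u\in\mathcal{C}$; by the first-order (variational) characterization of the Bregman projection this says precisely $y=\mathrm{Proj}^g_{\mathcal{C}}(w)$. Hence $\mathrm{Res}^g_{\mu N_{\mathcal{C}}}=\mathrm{Proj}^g_{\mathcal{C}}$ (the identity recorded just before the corollary), and substituting $\mathcal{B}=N_{\mathcal{C}}$ in Algorithm \ref{al1} turns its update into $v_{n+1}=\mathrm{Proj}^g_{\mathcal{C}}\big(\nabla g^*(\nabla g(v_n)-\mu(2\mathcal{A}v_n-\mathcal{A}v_{n-1}))\big)$, which is exactly the recursion in the statement. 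It remains to identify the limit set: $z\in(\mathcal{A}+N_{\mathcal{C}})^{-1}(0^*)$ iff $-\mathcal{A}z\in N_{\mathcal{C}}z$, i.e. iff $\langle\mathcal{A}z,u-z\rangle\ge 0$ for all $u\in\mathcal{C}$, i.e. iff $z\in VI(\mathcal{C},\mathcal{A})$; in particular the two sets coincide and are nonempty under the standing assumption. Theorem \ref{thm 1} then applies verbatim and gives weak convergence of $\{v_n\}$ to an element of $(\mathcal{A}+N_{\mathcal{C}})^{-1}(0^*)=VI(\mathcal{C},\mathcal{A})$, as asserted.

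There is essentially no genuine obstacle here, since the argument is purely a specialization; the only points that require (routine) care are the resolvent–Bregman-projection identity above — which uses that $g$ is Legendre and strongly coercive so that $\nabla g^*=(\nabla g)^{-1}$ by Remark \ref{CONT}(iii) and the Bregman projection onto $\mathcal{C}$ is well defined — and the verification that Lemma \ref{le5} is still applicable when $\mathcal{B}=N_{\mathcal{C}}$.
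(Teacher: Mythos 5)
Your proposal is correct and follows essentially the same route as the paper, which obtains the corollary simply by setting $\mathcal{B}=N_{\mathcal{C}}$ in Algorithm \ref{al1}, noting that ${Res}^g_{\mu\mathcal{B}}={Proj}^g_{\mathcal{C}}$ and that $(\mathcal{A}+N_{\mathcal{C}})^{-1}(0^*)=VI(\mathcal{C},\mathcal{A})$, and then invoking Theorem \ref{thm 1}. Your write-up merely makes explicit the routine verifications (maximal monotonicity of $N_{\mathcal{C}}$, the resolvent--projection identity, applicability of Lemma \ref{le5}) that the paper leaves implicit.
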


		\begin{cor}\label{COR4}
Let  Assumption \ref{ass1}  hold and  let $\alpha\in \Big(\delta,~\frac{\gamma(1-2\delta)}{2}\Big)$ for some $\delta\in (0, \frac{1}{2})$ and $~ \gamma>0.$ Let $\alpha\in (0, 1)$, $\mu_0,\mu_1>0$ and choose a nonnegative real sequence $\{d_n\}$  such that $\sum_{n=1}^{\infty} d_n< \infty.$ For arbitrary $v_0,v_1 \in \mathbb{X}$, let the sequence $\{v_n\}$ be generated by
	 \begin{eqnarray*}
v_{n+1}={Proj}^g_\mathcal{C}\Big(\nabla g^*(\nabla g(v_n)-\left((\mu_n+\mu_{n-1})\mathcal{A}v_n-\mu_{n-1}\mathcal{A}v_{n-1}\right))\Big)~\forall n\geq 1,
\end{eqnarray*}
where
\begin{eqnarray}
	\mu_{n+1}=\begin{cases}
	\min \left\{\frac{\alpha\|v_n-v_{n+1}\|}{||\mathcal{A}v_n-\mathcal{A}v_{n+1}||},~\mu_{n}+d_{n}\right\}, & \mbox{if}~  \mathcal{A}v_n\neq \mathcal{A}v_{n+1},\\
	\mu_{n}+d_n,& \mbox{otherwise}.
	\end{cases}
	\end{eqnarray}
	Then  $\{v_n\}$ converges weakly to an element of $VI(\mathcal{C}, \mathcal{A}).$
		\end{cor}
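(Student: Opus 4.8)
The plan is to obtain Corollary \ref{COR4} as a direct specialization of Theorem \ref{thm 1A}, exactly as Corollary \ref{COR1} was obtained from Theorem \ref{thm 1}. The key observation is that the normal cone operator $N_\mathcal{C}$ defined in \eqref{NC} is maximal monotone whenever $\mathcal{C}$ is nonempty, closed and convex, so Assumption \ref{ass1}(a) is satisfied with $\mathcal{B}=N_\mathcal{C}$, and the Lipschitz hypothesis \ref{ass1}(b) on $\mathcal{A}$ is carried over unchanged. Therefore the hypotheses of Theorem \ref{thm 1A} hold verbatim for the pair $(\mathcal{A}, N_\mathcal{C})$, and its conclusion gives weak convergence of $\{v_n\}$ to a point of $(\mathcal{A}+N_\mathcal{C})^{-1}(0^*)$.

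The second ingredient is the identification $\mbox{Res}^g_{\mu N_\mathcal{C}} = {Proj}^g_\mathcal{C}$ and the identification $(\mathcal{A}+N_\mathcal{C})^{-1}(0^*) = VI(\mathcal{C},\mathcal{A})$. For the first, note that $v = \mbox{Res}^g_{\mu N_\mathcal{C}}(x) = (\nabla g + \mu N_\mathcal{C})^{-1}(\nabla g(x))$ means $\nabla g(x) - \nabla g(v) \in \mu N_\mathcal{C} v$, i.e. $\langle \nabla g(x)-\nabla g(v), y-v\rangle \le 0$ for all $y\in\mathcal{C}$; by the variational characterization of the Bregman projection (using the three point identity \eqref{3P} and strict convexity of $g$), this is precisely the statement $v = {Proj}^g_\mathcal{C}(x)$. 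For the second, $0^* \in (\mathcal{A}+N_\mathcal{C})v$ means $-\mathcal{A}v \in N_\mathcal{C}v$, which by \eqref{NC} says $v\in\mathcal{C}$ and $\langle -\mathcal{A}v, y-v\rangle \le 0$, i.e. $\langle \mathcal{A}v, y-v\rangle \ge 0$ for all $y\in\mathcal{C}$ — exactly membership in $VI(\mathcal{C},\mathcal{A})$. Substituting these two identities into the iteration of Algorithm \ref{AL1A} produces precisely the iteration displayed in Corollary \ref{COR4} (with the step size rule \eqref{eq1A} rewritten with $N_\mathcal{C}$ playing no role, since it only involves $\mathcal{A}$ and the iterates).

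Having assembled these pieces, I would write: apply Theorem \ref{thm 1A} with $\mathcal{B}=N_\mathcal{C}$; since $N_\mathcal{C}$ is maximal monotone, Assumption \ref{ass1} holds, and since $\alpha\in(\delta, \gamma(1-2\delta)/2)$ with $\delta\in(0,\tfrac12)$, the hypotheses of Theorem \ref{thm 1A} are met; the theorem then yields weak convergence of $\{v_n\}$ to some $v\in(\mathcal{A}+N_\mathcal{C})^{-1}(0^*)=VI(\mathcal{C},\mathcal{A})$, using the resolvent-projection identity to see that the iteration coincides with the one stated. I do not anticipate any genuine obstacle here; the only points requiring a line of justification are the maximal monotonicity of the normal cone and the resolvent-equals-projection identity, both of which are standard and essentially immediate from the definitions in Section \ref{Se2}.
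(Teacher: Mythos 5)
Your proposal is correct and takes essentially the same route as the paper, which obtains Corollary \ref{COR4} simply by setting $\mathcal{B}=N_\mathcal{C}$ in Algorithm \ref{AL1A}, noting that $\mbox{Res}^g_{\mu N_\mathcal{C}}={Proj}^g_\mathcal{C}$, and invoking Theorem \ref{thm 1A}. The extra verifications you supply (maximal monotonicity of $N_\mathcal{C}$, the resolvent--projection identity, and $(\mathcal{A}+N_\mathcal{C})^{-1}(0^*)=VI(\mathcal{C},\mathcal{A})$) are exactly the standard details the paper leaves implicit.
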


\subsection{Rate of Convergence} ${}$\\
In this subsection we obtain rates of convergence for both Algorithm \ref{al1} and Algorithm \ref{AL1A}.

\noindent It follows from Algorithm \ref{al1} (or Algorithm \ref{AL1A}) that $v_{n+1}=v_n=v_{n-1}$ if and only if $v_n\in (\mathcal{A}+\mathcal{B})^{-1}(0^*)$. That is,
\begin{eqnarray*}
v_{n+1}=v_n=v_{n-1} & \Leftrightarrow& v_n={Res}^g_{\mu\mathcal{B}}\Big(\nabla g^*(\nabla g(v_n)-\mu\mathcal{A}v_n)\Big)\nonumber\\
& \Leftrightarrow & v_n=\left(\nabla g+\mu \mathcal{B}\right)^{-1} \circ \nabla g \Big(\nabla g^*(\nabla g(v_n)-\mu\mathcal{A}v_n)\Big)\nonumber\\
& \Leftrightarrow & \left(\nabla g(v_n)- \mu\mathcal{A}v_n \right)\in \left(\nabla g (v_n)+\mu \mathcal{B}v_n\right)\nonumber\\
& \Leftrightarrow & v_n\in (\mathcal{A}+\mathcal{B})^{-1}(0^*).
\end{eqnarray*}
In our theorems, we established that $||v_{n+1}-v_n||\to 0$ as $n\to\infty$ (which also means that $||v_{n}-v_{n-1}||\to 0$ as $n\to\infty$) whenever $(\mathcal{A}+\mathcal{B})^{-1}(0^*)$ is nonempty. Hence, using $||v_{n+1}-v_n||$ as a measure of the convergence rate, we obtain in the next theorem a sublinear rate of convergence for Algorithm \ref{al1}.

\begin{theorem}\label{rate}
Let  Assumption \ref{ass1} hold and let $\mu\in \Big[\delta,~\frac{\gamma(1-2\delta)}{2L}\Big]$ for some $\delta\in (0, \frac{1}{2})$ and $~ \gamma>0.$
Then $$\min\limits_{1\leq j\leq n} \|v_{j+1}-v_j\|=\mathcal{O}(1/\sqrt{n}).$$
\end{theorem}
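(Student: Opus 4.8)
The plan is to extract a telescoping bound from the proof of Theorem~\ref{thm 1} and convert summability of $\sum_n D_g(v_{n+1},v_n)$ into the stated $\mathcal{O}(1/\sqrt n)$ rate. Recall that in the proof of Theorem~\ref{thm 1} we obtained the sequences $s_n = D_g(z,v_n)+\mu\langle \mathcal{A}v_n-\mathcal{A}v_{n-1},z-v_n\rangle+\tfrac12 D_g(v_n,v_{n-1})$ and $t_n = \delta D_g(v_{n+1},v_n)$, together with $s_{n+1}\leq s_n-t_n$ for all $n\geq 1$ and $s_n\geq 0$. The first step is to sum this inequality: for each $n\geq 1$, $\sum_{j=1}^{n} t_j \leq s_1 - s_{n+1}\leq s_1$, hence $\sum_{j=1}^{n}\delta D_g(v_{j+1},v_j)\leq s_1$, so that $\delta\, n\cdot \min_{1\leq j\leq n} D_g(v_{j+1},v_j)\leq s_1$, which gives $\min_{1\leq j\leq n} D_g(v_{j+1},v_j)\leq s_1/(\delta n)$.

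The second step is to transfer this bound on the Bregman distance to a bound on the norm. By the $\gamma$-strong convexity of $g$ and inequality~\eqref{17AA}, $D_g(v_{j+1},v_j)\geq \tfrac{\gamma}{2}\|v_{j+1}-v_j\|^2$, so $\min_{1\leq j\leq n}\|v_{j+1}-v_j\|^2 \leq \tfrac{2}{\gamma}\min_{1\leq j\leq n} D_g(v_{j+1},v_j)\leq \tfrac{2 s_1}{\gamma\delta n}$. (One must be slightly careful that the index attaining the minimum of $\|v_{j+1}-v_j\|$ also controls $D_g$; but since $\|v_{j+1}-v_j\|^2 \leq \tfrac{2}{\gamma}D_g(v_{j+1},v_j)$ holds termwise, the minimum over $j$ of the left side is bounded by $\tfrac{2}{\gamma}$ times the minimum over $j$ of the right side.) Taking square roots yields
\begin{eqnarray*}
\min_{1\leq j\leq n}\|v_{j+1}-v_j\| \leq \sqrt{\frac{2 s_1}{\gamma\delta}}\cdot\frac{1}{\sqrt n} = \mathcal{O}(1/\sqrt n),
\end{eqnarray*}
which is the assertion. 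The constant $s_1 = D_g(z,v_1)+\mu\langle \mathcal{A}v_1-\mathcal{A}v_0,z-v_1\rangle+\tfrac12 D_g(v_1,v_0)$ is finite and independent of $n$, so this is a genuine $\mathcal{O}(1/\sqrt n)$ statement.

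I do not expect any serious obstacle here: the hard analytic work (nonnegativity of $s_n$, the descent inequality, boundedness) was already done in Theorem~\ref{thm 1}, and the rate is a routine consequence of telescoping a summable nonnegative series together with the standard averaging trick $n\min_j a_j \leq \sum_{j=1}^n a_j$. The only point requiring a line of care is the passage from $\min_j D_g$ to $\min_j\|\cdot\|$, handled by the termwise inequality~\eqref{17AA} as noted above. An entirely analogous argument, using the descent inequality~\eqref{4.19*} valid for $n\geq n_0$ and the boundedness of $\{\mu_n\}$, gives the same $\mathcal{O}(1/\sqrt n)$ rate for Algorithm~\ref{AL1A}.
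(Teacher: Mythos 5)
Your proof is correct and follows essentially the same route as the paper: telescoping the descent inequality $s_{n+1}\leq s_n-t_n$ from the proof of Theorem~\ref{thm 1}, using the nonnegativity $s_{n+1}\geq 0$ from \eqref{4.8**}, converting Bregman distances to squared norms via \eqref{17AA}, and applying the averaging bound $n\min_j a_j\leq\sum_{j=1}^n a_j$, with the same constant $D_g(z,v_1)+\mu\langle \mathcal{A}v_1-\mathcal{A}v_0,z-v_1\rangle+\tfrac12 D_g(v_1,v_0)$. The only (immaterial) difference is that the paper first passes from the summed Bregman distances to summed squared norms and then takes the minimum, whereas you take the minimum of the Bregman distances first; your termwise justification of that step is fine.
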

\begin{proof} It follows from \eqref{a5} that
\begin{align*}\nonumber
\delta D_g(v_{n+1},v_n)	&\leq D_g(z,v_n)-D_g(z,v_{n+1})+\mu\langle \mathcal{A}v_n-\mathcal{A}v_{n-1},z-v_n\rangle \\
				&-\mu\langle \mathcal{A}v_{n+1}-\mathcal{A}v_n,z-v_{n+1}\rangle +\frac{1}{2}D_g(v_n,v_{n-1}) -\frac{1}{2}D_g(v_{n+1},v_n).
				\end{align*}
This implies that
\begin{align}\label{AB}\nonumber
\delta \sum_{j=1}^n D_g(v_{j+1},v_j)	&\leq D_g(z,v_1)-D_g(z,v_{n+1})+\mu\langle \mathcal{A}v_1-\mathcal{A}v_{0}, z-v_1\rangle \\ \nonumber
				&-\mu\langle \mathcal{A}v_{n+1}-\mathcal{A}v_n,z-v_{n+1}\rangle +\frac{1}{2}D_g(v_1,v_{0}) -\frac{1}{2}D_g(v_{n+1},v_n)\\
				 &= D_g(z,v_1)+\mu\langle \mathcal{A}v_1-\mathcal{A}v_{0}, z-v_1\rangle +\frac{1}{2}D_g(v_1,v_{0}) -s_{n+1},
				\end{align}
where we have $$s_{n+1}=D_g(z,v_{n+1})+\mu\langle \mathcal{A}v_{n+1}-\mathcal{A}v_{n},z-v_{n+1}\rangle +\frac{1}{2}D_g(v_{n+1},v_{n}).$$
By \eqref{4.8**}, we also have $s_{n+1}\geq 0~\forall n\geq 1.$
Hence, it follows from \eqref{AB} and \eqref{17AA} that
\begin{align*}\nonumber
\sum_{j=1}^n \|v_{j+1}-v_j\|^2	&\leq \frac{2}{\delta \gamma} \left[D_g(z,v_1)+\mu\langle \mathcal{A}v_1-\mathcal{A}v_{0}, z-v_1\rangle +\frac{1}{2}D_g(v_1,v_{0})\right].
				\end{align*}
Therefore,
$$\min\limits_{1\leq j\leq n} \|v_{j+1}-v_j\|^2	\leq \frac{2}{n\delta \gamma} \left[D_g(z,v_1)+\mu\langle \mathcal{A}v_1-\mathcal{A}v_{0}, z-v_1\rangle +\frac{1}{2}D_g(v_1,v_{0})\right].$$
This means that
$$\min\limits_{1\leq j\leq n} \|v_{j+1}-v_j\|=\mathcal{O}(1/\sqrt{n}).$$
\end{proof}
Similarly to Theorem \ref{rate}, we also have the following sublinear rate of convergence result for Algorithm \ref{AL1A}.
\begin{theorem}
Let  Assumption \ref{ass1}  hold and  let $\alpha\in \Big(\delta,~\frac{\gamma(1-2\delta)}{2}\Big)$ for some $\delta\in (0, \frac{1}{2})$ and $~ \gamma>0.$ Then $$\min\limits_{1\leq j\leq n} \|v_{j+1}-v_j\|=\mathcal{O}(1/\sqrt{n}).$$
		\end{theorem}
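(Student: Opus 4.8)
The plan is to mirror the proof of Theorem \ref{rate} almost verbatim, adapting the two places where the constant step size $\mu$ is replaced by the adaptive sequence $\{\mu_n\}$. First I would start from inequality \eqref{a5A}, which holds for all $n\geq n_0$, and rewrite it as a telescoping inequality. Specifically, moving $D_g(z,v_{n+1})$ and the inner-product term at level $n+1$ to the left and the corresponding terms at level $n$ to the right, I obtain
\begin{align*}
\delta D_g(v_{n+1},v_n)&\leq D_g(z,v_n)-D_g(z,v_{n+1})+\mu_{n-1}\langle \mathcal{A}v_n-\mathcal{A}v_{n-1},z-v_n\rangle\\
&\quad-\mu_n\langle \mathcal{A}v_{n+1}-\mathcal{A}v_n,z-v_{n+1}\rangle+\tfrac{1}{2}D_g(v_n,v_{n-1})-\tfrac{1}{2}D_g(v_{n+1},v_n)
\end{align*}
for all $n\geq n_0$, i.e. $\delta D_g(v_{n+1},v_n)\leq s_n-s_{n+1}$ with $s_n$ as defined just before \eqref{4.19*}. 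Summing this from $j=n_0$ to $n$ telescopes the $s_j$ terms, giving $\delta\sum_{j=n_0}^n D_g(v_{j+1},v_j)\leq s_{n_0}-s_{n+1}\leq s_{n_0}$, where the last inequality uses the analogue of \eqref{4.8**} (i.e.\ $s_{n+1}\geq 0$ for $n\geq n_0$), which was already established in the proof of Theorem \ref{thm 1A}.

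Next I would invoke the strong convexity bound \eqref{17AA}, namely $D_g(v_{j+1},v_j)\geq \tfrac{\gamma}{2}\|v_{j+1}-v_j\|^2$, to convert the Bregman-distance sum into a sum of squared norms: $\tfrac{\delta\gamma}{2}\sum_{j=n_0}^n\|v_{j+1}-v_j\|^2\leq s_{n_0}$. Hence $\sum_{j=n_0}^n\|v_{j+1}-v_j\|^2\leq \tfrac{2s_{n_0}}{\delta\gamma}$, a constant independent of $n$. Since the minimum of a finite collection of nonnegative numbers is at most their average, $\min_{n_0\leq j\leq n}\|v_{j+1}-v_j\|^2\leq \tfrac{2s_{n_0}}{\delta\gamma(n-n_0+1)}$, and therefore $\min_{1\leq j\leq n}\|v_{j+1}-v_j\|^2\leq \tfrac{2s_{n_0}}{\delta\gamma(n-n_0+1)}=\mathcal{O}(1/n)$, which yields $\min_{1\leq j\leq n}\|v_{j+1}-v_j\|=\mathcal{O}(1/\sqrt{n})$ as claimed.

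The only genuine subtlety — and thus the main obstacle — is that \eqref{a5A} is valid only from the index $n_0$ onward (the threshold coming from Remark \ref{VSS}(ii) and the limit $1-\tfrac{\mu_n}{\mu_{n+1}}\alpha\gamma^{-1}\to 1-\alpha\gamma^{-1}>\tfrac12+\delta$), rather than from $n=1$ as in the constant-step-size case. This is harmless for the asymptotic rate: one simply runs the telescoping sum from $n_0$ instead of from $1$, and absorbs the finitely many missing early terms into the constant (or notes that $\min_{1\leq j\leq n}\leq \min_{n_0\leq j\leq n}$). One should also record that $s_{n_0}<\infty$, which is immediate since $\{v_n\}$ is bounded (shown in Theorem \ref{thm 1A}), $\{\mu_n\}$ is bounded (Remark \ref{VSS}(ii)), and $\mathcal{A}$ is Lipschitz continuous, so every term defining $s_{n_0}$ is finite. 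With these observations the argument is otherwise identical to that of Theorem \ref{rate}, so I would simply state that the proof follows \emph{mutatis mutandis}, highlighting the shift of the summation index and the boundedness of $\{\mu_n\}$ as the only changes.
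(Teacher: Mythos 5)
Your proposal is correct and coincides with the paper's intended argument: the paper omits the proof, stating only that it proceeds ``similarly to Theorem \ref{rate}'', and your adaptation — telescoping \eqref{a5A} from the threshold index $n_0$ onward, using $s_{n+1}\geq 0$, the strong convexity bound \eqref{17AA}, and the boundedness of $\{\mu_n\}$ — is precisely that adaptation. Your handling of the index shift (starting the sum at $n_0$ and absorbing the finitely many early terms into the constant) is the only point requiring care, and you address it correctly.
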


\subsection{Strong Convergence} ${}$\\
In order to obtain strong convergence of the sequences generated by our methods, we replace Assumption \ref{ass1} with the following conditions.

\hrule\hrule	
	\begin{ass}\label{ass1*} Let  $\mathcal{A}:\mathbb{X}\to \mathbb{X}^*$ and $\mathcal{B}:\mathbb{X}\to 2^{\mathbb{X}^*}$ be operators which satisfy the following conditions:
\hrule\hrule
\begin{itemize}
\item[(a)] $\mathcal{A}$  is monotone on $\mathbb{X}$ and $\mathcal{B}$ is maximal monotone and $\tau$-strongly monotone on $\mathbb{X}$,
\item[(a)*] $\mathcal{A}$  is $\tau$-strongly monotone on $\mathbb{X}$ and $\mathcal{B}$ is maximal monotone on $\mathbb{X}$,
\item[(b)]  $\mathcal{A}$ is Lipschitz continuous on $\mathbb{X}$ with constant $L>0$.
		\end{itemize}
\end{ass}		

\hfill

 \begin{lemma}\label{lem1S} Let a sequence $\{v_n\}$ be generated by Algorithm \ref{al1} when Assumption \ref{ass1*}(a) or (a)*  holds. Then
	\begin{align*}
\tau\mu \|v_{n+1}-z\|^2	&\leq D_g(z,v_n)-D_g(z,v_{n+1})+\mu\langle \mathcal{A}v_n-\mathcal{A}v_{n-1},z-v_n\rangle+\mu\langle \mathcal{A}v_n-\mathcal{A}v_{n-1},v_n-v_{n+1}\rangle\\
	&+\mu\langle \mathcal{A}v_n-\mathcal{A}v_{n+1}, z-v_{n+1}\rangle -D_g(v_{n+1},v_n) \hspace{0.2cm}\forall z\in (\mathcal{A}+\mathcal{B})^{-1}(0^*).
	\end{align*}
\end{lemma}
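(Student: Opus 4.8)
The plan is to reprove Lemma \ref{lem1} line by line, inserting strong monotonicity at the one place where plain monotonicity was used, so as to pick up the extra summand $\tau\mu\|v_{n+1}-z\|^2$ on the left. First I would fix $z\in(\mathcal{A}+\mathcal{B})^{-1}(0^*)$, so that $-\mathcal{A}z\in\mathcal{B}z$, and recall from the definition of $v_{n+1}$ and of ${Res}^g_{\mu\mathcal{B}}$ (see \eqref{RES}) the inclusion \eqref{**}, namely
$$\tfrac{1}{\mu}\bigl(\nabla g(v_n)-\mu(2\mathcal{A}v_n-\mathcal{A}v_{n-1})-\nabla g(v_{n+1})\bigr)\in\mathcal{B}v_{n+1}.$$
Under Assumption \ref{ass1*}(a), $\mathcal{B}$ is $\tau$-strongly monotone; pairing this element of $\mathcal{B}v_{n+1}$ against $-\mathcal{A}z\in\mathcal{B}z$, testing with $v_{n+1}-z$, and clearing the factor $1/\mu$ by multiplying by $\mu>0$ gives
$$\tau\mu\|v_{n+1}-z\|^2\le\langle\nabla g(v_{n+1})-\nabla g(v_n)+\mu(2\mathcal{A}v_n-\mathcal{A}v_{n-1})-\mu\mathcal{A}z,\ z-v_{n+1}\rangle,$$
which is exactly the right-hand side of \eqref{M1}, now bounding $\tau\mu\|v_{n+1}-z\|^2$ from above rather than $0$. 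From here I would copy the rest of the proof of Lemma \ref{lem1} verbatim: invoke the monotonicity of $\mathcal{A}$ to obtain \eqref{M2}, substitute, split $\langle\mathcal{A}v_n-\mathcal{A}v_{n-1},z-v_{n+1}\rangle=\langle\mathcal{A}v_n-\mathcal{A}v_{n-1},z-v_n\rangle+\langle\mathcal{A}v_n-\mathcal{A}v_{n-1},v_n-v_{n+1}\rangle$, and apply the three point identity \eqref{3P}. This yields the asserted inequality.

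For Assumption \ref{ass1*}(a)* the roles are interchanged: $\mathcal{B}$ is merely monotone, so \eqref{**} and the monotonicity of $\mathcal{B}$ still give \eqref{M1} with a bare $0$ on the left, but now $\mathcal{A}$ is $\tau$-strongly monotone. I would then sharpen \eqref{M2} by noting that
$$\langle\mathcal{A}v_n-\mathcal{A}z,z-v_{n+1}\rangle-\langle\mathcal{A}v_n-\mathcal{A}v_{n+1},z-v_{n+1}\rangle=-\langle\mathcal{A}v_{n+1}-\mathcal{A}z,v_{n+1}-z\rangle\le-\tau\|v_{n+1}-z\|^2,$$
so that after multiplying by $\mu$ and substituting into \eqref{M1} the term $-\tau\mu\|v_{n+1}-z\|^2$ surfaces and is carried to the left-hand side; the remaining steps (the same splitting and the three point identity) are identical and again deliver the claim.

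I do not anticipate a genuine obstacle; the one point deserving care is the bookkeeping of the factor $\mu$, so that the strong-monotonicity constant enters precisely as $\tau\mu$ — in case (a) because the $1/\mu$ in \eqref{**} is cancelled upon multiplying by $\mu$, and in case (a)* because the $\mathcal{A}$-term already carries a factor $\mu$ in \eqref{M1}. It is also worth recording, though not needed for the lemma itself, that under either strong-monotonicity hypothesis the point $z$ is forced to be the unique element of $(\mathcal{A}+\mathcal{B})^{-1}(0^*)$, which is what makes this estimate the natural starting point for the subsequent strong convergence theorems.
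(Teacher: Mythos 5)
Your proposal is correct and follows essentially the same route as the paper: in case (a) you insert the $\tau$-strong monotonicity of $\mathcal{B}$ when testing the inclusion \eqref{**} against $-\mathcal{A}z\in\mathcal{B}z$, which is exactly the paper's step leading to \eqref{a2S}, and in case (a)* you sharpen \eqref{M2} to the paper's \eqref{M2S} via the strong monotonicity of $\mathcal{A}$ before applying the three point identity. The bookkeeping of the factor $\mu$ is handled exactly as in the paper, so there is nothing to add.
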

\begin{proof}
Let $z\in (\mathcal{A}+\mathcal{B})^{-1}(0^*)$. Then $-\mathcal{A}z\in \mathcal{B}z.$ Using arguments similar to those used in obtaining \eqref{**}, we get
		\begin{eqnarray}\label{**S}
		\frac{1}{\mu}\left(\nabla g(v_n)-\mu (2\mathcal{A}v_n-\mathcal{A}v_{n-1})-\nabla g(v_{n+1})\right)\in B v_{n+1}.
		\end{eqnarray}

\hfill

\noindent If we use Assumption \ref{ass1*}(a), in particular, the $\tau$-strong monotonicity of $\mathcal{B}$, we get
$$\tau \|v_{n+1}-z\|^2\leq \left\langle \frac{1}{\mu}\left(\nabla g (v_n)-\mu (2\mathcal{A}v_n-\mathcal{A}v_{n-1})-\nabla  g(v_{n+1})\right)+\mathcal{A}z, v_{n+1}-z\right\rangle,$$
	which implies that
	\begin{eqnarray*}
	\mu \tau \|v_{n+1}-z\|^2&\leq & \langle \nabla g (v_{n+1})-\nabla g (v_n), z-v_{n+1}\rangle+\mu \langle \mathcal{A}v_n-\mathcal{A}z, z-v_{n+1}\rangle+\mu \langle \mathcal{A}v_n-\mathcal{A}v_{n-1}, z-v_{n+1}\rangle.
\end{eqnarray*}		
Using the monotonicity of $\mathcal{A}$ and equation \eqref{3P}, we obtain
		\begin{align}\label{a2S}\nonumber
		\mu \tau \|v_{n+1}-z\|^2&\leq D_g(z,v_n)-D_g(z,v_{n+1})-D_g(v_{n+1}, v_n)+\mu\langle \mathcal{A}v_n-\mathcal{A}v_{n+1},z-v_{n+1}\rangle +\mu\langle \mathcal{A}v_n-\mathcal{A}v_{n-1},z-v_{n}\rangle \\
		& +\mu\langle \mathcal{A}v_n-\mathcal{A}v_{n-1},v_n-v_{n+1}\rangle,
		\end{align}
		which is the desired inequality.

\hfill		
		
\noindent On the other hand, if we use Assumption \ref{ass1*}(a)*, then by  the monotonicity of $\mathcal{B}$, we obtain
		$$0\leq \left\langle \frac{1}{\mu}\left(\nabla g (v_n)-\mu (2\mathcal{A}v_n-\mathcal{A}v_{n-1})-\nabla  g(v_{n+1})\right)+\mathcal{A}z, v_{n+1}-z\right\rangle,$$
	which implies that
	\begin{eqnarray}\label{M1S}
	0&\leq & \langle \nabla g (v_{n+1})-\nabla g (v_n), z-v_{n+1}\rangle+\mu \langle \mathcal{A}v_n-\mathcal{A}z, z-v_{n+1}\rangle+\mu \langle \mathcal{A}v_n-\mathcal{A}v_{n-1}, z-v_{n+1}\rangle.
\end{eqnarray}		
Since $\mathcal{A}$ is $\tau$-strongly monotone, it follows that
\begin{eqnarray}\label{M2S}
\langle \mathcal{A}v_n-\mathcal{A}z, z-v_{n+1}\rangle\leq \langle \mathcal{A}v_n-\mathcal{A}v_{n+1}, z-v_{n+1}\rangle-\tau\|v_{n+1}-z\|^2.
\end{eqnarray} 		
Now, using \eqref{M2S} in \eqref{M1S} and noting equation \eqref{3P}, we arrive at the desired conclusion.
\end{proof}

\begin{theorem}\label{thm 1S}
Let  Assumption \ref{ass1*}(a),(b) or (a)*,(b) hold and let $\mu\in \Big[\delta,~\frac{\gamma(1-2\delta)}{2L}\Big]$ for some $\delta\in (0, \frac{1}{2})$ and $~ \gamma>0.$
Then the sequence $\{v_n\}$ generated by Algorithm \ref{al1} converges strongly to  $u\in (\mathcal{A}+\mathcal{B})^{-1}(0^*).$
		\end{theorem}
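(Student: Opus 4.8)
The plan is to combine the key inequality from Lemma \ref{lem1S} with the weak convergence machinery already developed in Theorem \ref{thm 1}. First I would note that the proof of Theorem \ref{thm 1} only used Assumption \ref{ass1} and the estimate of Lemma \ref{lem1}; since Lemma \ref{lem1S} gives exactly the inequality of Lemma \ref{lem1} with an \emph{extra} nonnegative term $\tau\mu\|v_{n+1}-z\|^2$ on the left, the entire argument of Theorem \ref{thm 1} goes through verbatim under Assumption \ref{ass1*}(a),(b) (or (a)*,(b)). In particular, the sequence $\{v_n\}$ is bounded, $D_g(v_{n+1},v_n)\to 0$, hence $\|v_{n+1}-v_n\|\to 0$ by \eqref{17AA}, $\|\nabla g(v_{n+1})-\nabla g(v_n)\|\to 0$, $\|\mathcal{A}v_n-\mathcal{A}v_{n-1}\|\to 0$, and $\{v_n\}$ converges weakly to some $u\in(\mathcal{A}+\mathcal{B})^{-1}(0^*)$. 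Moreover, $\lim_{n\to\infty} D_g(u,v_n)$ exists, because $\{s_n\}$ (with $z=u$) converges and the other two summands tend to $0$.

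The next step is to upgrade weak convergence to strong convergence using the extra term. From Lemma \ref{lem1S} with $z=u$, together with the Lipschitz bound \eqref{a3} (or \eqref{a3A} in the adaptive case) exactly as in the passage leading to \eqref{a5}, I would obtain
\begin{align*}
\tau\mu\|v_{n+1}-u\|^2 &\leq s_n - s_{n+1},
\end{align*}
where $s_n=D_g(u,v_n)+\mu\langle \mathcal{A}v_n-\mathcal{A}v_{n-1},u-v_n\rangle+\tfrac12 D_g(v_n,v_{n-1})$ (dropping the nonnegative term $\delta D_g(v_{n+1},v_n)$). Since $\{s_n\}$ is nonincreasing (by \eqref{4.8*}) and bounded below by $\tfrac12 D_g(u,v_n)\geq 0$, it converges, so $s_n-s_{n+1}\to 0$; therefore $\tau\mu\|v_{n+1}-u\|^2\to 0$, i.e. $v_n\to u$ strongly. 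Alternatively, and perhaps more cleanly, one can observe that $\lim_n D_g(u,v_n)$ exists and $D_g(v_n,v_{n-1})\to 0$, $\langle \mathcal{A}v_n-\mathcal{A}v_{n-1},u-v_n\rangle\to 0$ (by boundedness of $\{v_n\}$ and $\|\mathcal{A}v_n-\mathcal{A}v_{n-1}\|\to 0$), so $\lim_n s_n = \lim_n D_g(u,v_n)$ exists; then summing $\tau\mu\|v_{j+1}-u\|^2\leq s_j-s_{j+1}$ over $j$ gives $\sum_j \|v_{j+1}-u\|^2<\infty$, whence $\|v_{n+1}-u\|\to 0$.

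The main obstacle, such as it is, lies in checking that $s_n-s_{n+1}$ dominates $\tau\mu\|v_{n+1}-u\|^2$ after all the Lipschitz estimates are inserted: one must verify that the coefficient bookkeeping in \eqref{a3}--\eqref{a5} (using $\mu L\gamma^{-1}<\tfrac12$, coming from $\mu\leq \tfrac{\gamma(1-2\delta)}{2L}$) still leaves the left-hand side $\tau\mu\|v_{n+1}-u\|^2$ intact and the $D_g(v_{n+1},v_n)$-coefficient with a favorable sign $-(\tfrac12+\delta)$, exactly as in Theorem \ref{thm 1}. Since Lemma \ref{lem1S} adds the strong-monotonicity term on the side \emph{opposite} to all the terms manipulated in that derivation, no new difficulty arises and the estimate \eqref{a5} holds with $\tau\mu\|v_{n+1}-u\|^2$ prepended to its left-hand side. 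Finally, for the self-adaptive Algorithm \ref{AL1A}, one repeats the argument using the analogue of Lemma \ref{lem1S} obtained by the same modification of \eqref{lem1A} (replacing $\mu$ by $\mu_{n-1}$ or $\mu_n$ as appropriate and invoking Remark \ref{VSS}(ii) so that $\mu_n\to\mu>0$), which yields $\tau\mu_n\|v_{n+1}-u\|^2\leq s_n-s_{n+1}$ for $n\geq n_0$ and hence the same conclusion, using $\liminf_n \mu_n>0$.
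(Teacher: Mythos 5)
Your proposal is correct and follows essentially the same route as the paper: reuse the machinery of Theorem \ref{thm 1} (boundedness, $\|v_{n+1}-v_n\|\to 0$, $\|\mathcal{A}v_{n+1}-\mathcal{A}v_n\|\to 0$, existence of $\lim_{n\to\infty}D_g(u,v_n)$), then feed these facts into Lemma \ref{lem1S} with $z=u$ to force $\tau\mu\|v_{n+1}-u\|^2\to 0$. Your telescoping formulation $\tau\mu\|v_{n+1}-u\|^2\le s_n-s_{n+1}$ is just a slightly more explicit packaging of the paper's term-by-term limit argument, so no substantive difference.
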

\begin{proof}
Let $u\in (\mathcal{A}+\mathcal{B})^{-1}(0^*)$. Using similar arguments to those used in the proof of Theorem \ref{thm 1}, we obtain that $\{v_n\}$ is bounded, $\lim\limits_{n\to\infty}\|v_n-v_{n+1}\|=0,~\lim\limits_{n\to\infty}\|\mathcal{A}v_n-\mathcal{A}v_{n+1}\|=0$ and $\lim\limits_{n\to\infty}D_g(u, v_n)$ exists. Using these facts in Lemma \ref{lem1S} and replacing $z$ with $u$, we see that $\lim\limits_{n\to\infty}\|v_{n+1}-u\|\leq 0$, which implies
that $\{v_n\}$ converges strongly to  $u\in (\mathcal{A}+\mathcal{B})^{-1}(0^*),$ as asserted.
\end{proof}

\hfill

Similarly to Theorem \ref{thm 1S}, we have the following strong convergence theorem for Algorithm \ref{AL1A}.
\begin{theorem}\label{thm 1S*}
Let  Assumption \ref{ass1*}(a),(b) or (a)*,(b) hold and  let $\alpha\in \Big(\delta,~\frac{\gamma(1-2\delta)}{2}\Big)$ for some $\delta\in (0, \frac{1}{2})$ and $~ \gamma>0.$ Then the sequence $\{v_n\}$ generated by Algorithm \ref{AL1A} converges strongly to  $u\in (\mathcal{A}+\mathcal{B})^{-1}(0^*).$\\
		\end{theorem}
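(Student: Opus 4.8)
The plan is to mirror the proof of Theorem \ref{thm 1S} but starting from the inequality of Theorem \ref{thm 1A} (the analogue of Lemma \ref{lem1} for Algorithm \ref{AL1A}) rather than from Lemma \ref{lem1}. The first step is to establish the strongly monotone analogue of \eqref{lem1A}, i.e. to show that under Assumption \ref{ass1*}(a) or (a)*,
\begin{align*}
\tau\mu_n\|v_{n+1}-z\|^2 &\leq D_g(z,v_n)-D_g(z,v_{n+1})+\mu_{n-1}\langle \mathcal{A}v_n-\mathcal{A}v_{n-1},z-v_n\rangle\\
&+\mu_{n-1}\langle \mathcal{A}v_n-\mathcal{A}v_{n-1},v_n-v_{n+1}\rangle+\mu_n\langle \mathcal{A}v_n-\mathcal{A}v_{n+1},z-v_{n+1}\rangle-D_g(v_{n+1},v_n)
\end{align*}
for all $z\in(\mathcal{A}+\mathcal{B})^{-1}(0^*)$. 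This is obtained exactly as in the proof of Lemma \ref{lem1S}: from \eqref{**S} with $\mu$ replaced by $\mu_n$ and the forward term replaced by $(\mu_n+\mu_{n-1})\mathcal{A}v_n-\mu_{n-1}\mathcal{A}v_{n-1}$, one applies either the $\tau$-strong monotonicity of $\mathcal{B}$ (case (a)) or the $\tau$-strong monotonicity of $\mathcal{A}$ together with monotonicity of $\mathcal{B}$ (case (a)*), and then the three point identity \eqref{3P}.

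Next I would re-run the weak convergence argument of Theorem \ref{thm 1A} verbatim: using \eqref{a3A}–\eqref{a4A***}, Remark \ref{VSS}(ii) and the hypothesis $\alpha\in(\delta,\gamma(1-2\delta)/2)$, one produces $n_0$, the sequences $s_n$ and $t_n=\delta D_g(v_{n+1},v_n)$ satisfying $s_{n+1}\leq s_n-t_n$ for $n\geq n_0$ with $s_n\geq 0$, and hence $\{v_n\}$ is bounded, $D_g(v_{n+1},v_n)\to 0$, so (by \eqref{17AA} and uniform Fr\'echet differentiability of $g$, giving uniform continuity of $\nabla g^*$ and $\nabla g$ on bounded sets) $\|v_{n+1}-v_n\|\to 0$, $\|\nabla g(v_{n+1})-\nabla g(v_n)\|\to 0$, $\|\mathcal{A}v_n-\mathcal{A}v_{n\pm1}\|\to 0$, and $\lim_{n\to\infty}D_g(u,v_n)$ exists for every weak cluster point $u\in(\mathcal{A}+\mathcal{B})^{-1}(0^*)$. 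Note $\{\mu_n\}$ is bounded by Remark \ref{VSS}(ii), so the products $\mu_n\langle\cdot,\cdot\rangle$ appearing in $s_n$ all tend to $0$, which is exactly what makes $\lim D_g(u,v_n)$ exist from convergence of $s_n$.

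Then I would plug these facts into the strongly monotone inequality above with $z$ replaced by $u$: the right-hand side is $s_n - s_{n+1} + \mu_{n-1}\langle\mathcal{A}v_n-\mathcal{A}v_{n-1},v_n-v_{n+1}\rangle - D_g(v_{n+1},v_n)$-type terms, all of which tend to $0$ (difference of convergent sequence, products of a bounded sequence with something going to zero, and $D_g(v_{n+1},v_n)\to0$). Hence $\tau\mu_n\|v_{n+1}-u\|^2\to 0$; since $\mu_n$ is bounded below by $\min\{\alpha/L,\mu_1\}>0$ (Remark \ref{VSS}(ii)) and $\tau>0$, this forces $\|v_{n+1}-u\|\to 0$, i.e. strong convergence of $\{v_n\}$ to $u\in(\mathcal{A}+\mathcal{B})^{-1}(0^*)$.

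The only genuinely delicate point — and the one I would state carefully rather than wave at — is that the strong monotonicity inequality is only asserted for $z$ in the solution set, so to invoke it with $z=u$ one must already know that $\{v_n\}$ has a weak cluster point $u$ lying in $(\mathcal{A}+\mathcal{B})^{-1}(0^*)$; this is supplied by the weak-convergence machinery (Lemma \ref{le5} on maximal monotonicity of $\mathcal{A}+\mathcal{B}$, together with the limit passage in the analogue of \eqref{a13}), so the logical order is: first extract $u$ from the weak theorem, then use the strongly monotone estimate to upgrade weak convergence along a subsequence to strong convergence of the whole sequence. Everything else is a routine transcription of the $\mu_n\to\mu$ bookkeeping already carried out in Theorem \ref{thm 1A} and Lemma \ref{lem1S}.
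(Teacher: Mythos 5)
Your proposal is correct and follows essentially the route the paper intends (the paper only sketches this proof as \emph{``similarly to Theorem \ref{thm 1S}''}): derive the variable-step-size analogue of Lemma \ref{lem1S} from \eqref{**S} with $\mu_n$, reuse the facts established in Theorem \ref{thm 1A} (boundedness, $D_g(v_{n+1},v_n)\to 0$, existence of $\lim_n D_g(u,v_n)$, boundedness of $\{\mu_n\}$ away from $0$), and let the right-hand side tend to zero. Your ``delicate point'' is in fact not needed: since $\{s_n\}$ is nonincreasing and nonnegative for every $z\in(\mathcal{A}+\mathcal{B})^{-1}(0^*)$, the limit $\lim_n D_g(z,v_n)$ exists for \emph{every} solution $z$ (and strong monotonicity makes the solution unique), so the strongly monotone estimate can be applied directly to any $z$ in the solution set without first extracting a weak cluster point, which is exactly how the paper argues in Theorem \ref{thm 1S}.
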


\begin{remark}
Similarly to Corollaries \ref{COR1}-\ref{COR4}, we can obtain strong convergence results as corollaries of Theorems \ref{thm 1S} and \ref{thm 1S*} for solving the NPP \eqref{NPP}  and the variational inequality problem in real reflexive Banach spaces.
\end{remark}

\section{Application to generalized Nash Equilbirium Problems for gas markets}\label{Se5}
\noindent In this section we apply our results to solving a Generalized Nash Equilibrium Problem (GNEP) for which the constraints are governed by a system of partial differential equations (PDEs). This type of problem is particularly useful in the trading and transporting of natural gas because the dynamics of gas transport is often posed as a system of PDEs. In \cite{GNEP}, using a generalized Nash equilibrium approach with PDE-constraints, the authors studied a two-node (production and consumption nodes) gas market.\\ Consider strategic firms that trade and transport natural gas through a pipeline system. That is, strategic gas firms that decide on their production of gas at an injection node (production node) and on their sales at a withdrawal node (consumption node) of a pipeline system with the aim of making maximum profit over a finite period of time $[0, T]$.
Here we focus on a single pipe only. The flow through the pipe which connects the injection node at $x=0$ and withdrawal node at $x=L$ (where $L>0$ is the length of the pipe) is governed by a system of PDEs with some boundary conditions. A detailed study of this system of PDEs can be found in \cite[Section 3]{GNEP}.

\noindent As  in \cite{GNEP}, let $y:=(p, q)$, where $p$ and $q$ are the pressure and mass flow, respectively, in the pipe. Note that $p$ and $q$ are the state variables in the pipe. Also, denote the collection of a firm's decisions by $u$. Then the inclusion $y=S(u)\in K$ (where the mapping $S$ provides the solution of the system of PDEs for any given $u$ and $K$ is a closed and convex set) describes the shared constraints of the firms. At any time $t\in [0, T]$, let $q^{\mbox{in}}(t)$ denote the quantity of gas injected into the pipe at the production node at a cost of $c(t)q^{\mbox{in}}(t)$. We also denote by $q^{\mbox{out}}(t)$ the quantity of gas withdrawn from the pipe at the consumption node due to the gas demand at this node. Then the price of gas is given by an inverse demand function $P(t, q^{\mbox{out}}(t)).$  We assume that firm $i$ ($i\in \{1,\dots, M\})$ strategically chooses the quantity $q_i^{\mbox{in}}(t)$ injected at the production node located at $x=0$ at a pressure $p_i^{\mbox{in}}(t)$ and the quantity $q_i^{\mbox{out}}(t)$ sold at the consumption node $x=L$ at a pressure $p_i^{\mbox{out}}(t)$ at any time $t\in [0, T]$. These decision variables are grouped into $u_i:=(p_i^{\mbox{in}},p_i^{\mbox{out}}, q_i^{\mbox{in}}, q_i^{\mbox{out}})$, which belong to a closed and convex subset $U_i$ of $L^2(0, T)^4$. \\
Thus, the maximization problem for firm $i$ is as follows::
\begin{eqnarray}
&&\max_{u_i\in U_i}\int_0^T \left[P\left(t, \sum_{k=1}^Mq_k^{\mbox{out}}(t)\right)q_i^{\mbox{out}}(t)-c_i(t)q_i^{\mbox{in}}(t)\right]dt-R_i(u_i)\label{GNEP1}\\
&& \mbox{such that} \hspace{0.3cm} 0\leq q_i^{\mbox{in}}(t)\leq \bar{q}_i^{\mbox{in}} \hspace{0.3cm} \mbox{for a.e}~~t\in (0,T), \label{GNEP2}\\
&& \int_0^T q_i^{\mbox{out}}(t)-q_i^{\mbox{in}}(t)\leq 0, \label{GNEP3}\\
&&S(u)\in K, \label{GNEP4}
\end{eqnarray}
where $\bar{q}_i^{\mbox{in}}$ is a product-specific capacity, $u=(u_i, u_{-i})$,  $u_i$ collects the decisions of firm $i$, $u_{-i}$ collects the decisions of all the other firms, and $R_i$ is a convex function. The feasible set of firm $i$ is the intersection of $U_i$ with the constraints \eqref{GNEP2}--\eqref{GNEP3} (see \cite{GNEP} for more details).

\noindent We note that the above maximization problem for strategic gas firms gives rise to the following GNEP for player $i$ ($i\in \{1, \dots, M\}$): \begin{eqnarray}\label{GNEP5}
&&\min_{u_i\in U}\int_0^T \left(\alpha(t) \sum_{k=1}^Mq_k^{\mbox{out}}(t)-\beta(t)\right)q_i^{\mbox{out}}(t)dt+g_i(u_i) \nonumber \\
&&\\
&&\mbox{such that}\hspace{0.3cm} u_i\in P^{\mbox{ad}}\times Q_i^{\mbox{ad}}~~;~~ S(u_i, u_{-i})\in K, \nonumber
\end{eqnarray}
where $g_i(u_i)=g^p(p_i)+g^q_i(q_i)$, the functions $g^p$ and $g_i^q$ are proper convex and lower semicontinuous, $\beta(t)-\alpha(t)q^{\mbox{out}}(t)=P(t, q^{\mbox{out}}(t))$, $P$ is a measurable function of $t$, the functions $\alpha$ and $\beta$ are real-valued functions of $t$, $P^{\mbox{ad}}$ and $Q_i^{\mbox{ad}}$ are subsets of $P:=H^1(0, T)^2$ (the space of pressures at the endpoints) and $Q:=H^1(0, T)^2$ (the space of mass flow at the endpoints), respectively, and $U:=P\times Q$ denotes the space of decision variables (controls) $u_i=(p_i, q_i)$ of each player.
If for each $i$, $g_i$ is coercive and the subdifferential $\partial g_i$ of $g_i$ is defined everywhere, then $P^{\mbox{ad}}$ and $Q_i^{\mbox{ad}}$ are closed and convex.\\
Clearly, the GNEP \eqref{GNEP1}--\eqref{GNEP4} that models the gas market fits into the settings of the GNEP \eqref{GNEP5}. That is, we can recover the  GNEP \eqref{GNEP1}--\eqref{GNEP4} from \eqref{GNEP5} by setting
$$g_i(u_i)=\int_0^T c_i(t)q_i^{\mbox{in}}(t)dt +R_i(u_i)$$
and $Q_i^{\mbox{ad}}$ to capture the constraints \eqref{GNEP2}--\eqref{GNEP3}.\\
The existence of solutions to the GNEP \eqref{GNEP5} for which all the pressures $p_i$ are identical was established in \cite[Theorem 4.1]{GNEP} under some conditions which are also satisfied by the GNEP \eqref{GNEP1}--\eqref{GNEP4}. \\

\noindent We now apply our algorithms in Section \ref{Se3} to solving the GNEP \eqref{GNEP5}, which in turn solves the GNEP \eqref{GNEP1}--\eqref{GNEP4} that models the gas market.  To this end, we consider some change of variables as in \cite{GNEP}.\\
 Given $y_0=(p_0, q_0)$, let $\hat{p}^0(t):=(p_0(0), p_0(L))$ and $\hat{q}^0(t):=(q_0(0)/M, q_0(L)/M)$ for any $t\in [0, T]$. Then $\hat{u}_i^{y_0}:=(\hat{p}^0, \hat{q}^0)$, and the shifted decisions for player $i$ are given by $\hat{u}_i=u_i-\hat{u}_i^{y_0}$. Let $\hat{P}:=\{\hat{p}\in P : p(0)=0\}$ and $\hat{Q}:=\{\hat{q}\in Q : q(0)=0\}$. Then $\hat{P}^{\mbox{ad}}:=\{\hat{p}\in \hat{P} : \hat{p}+\hat{p}^0\in P^{\mbox{ad}}\}$, $\hat{Q}_i^{\mbox{ad}}:=\{\hat{q}\in \hat{Q} : \hat{q}+\hat{q}^0\in Q_i^{\mbox{ad}}\}$ and $\hat{U}_i^{\mbox{ad}}=\hat{P}^{\mbox{ad}}\times \hat{Q}_i^{\mbox{ad}}$. Let $w_i=(w_i^{p0}, w_i^{pL}, w_i^{q0}, w_i^{qL})\in \hat{U}$, and define the mappings  $d_1:\hat{U} \times \hat{U}^{M-1}\to \hat{U}^*$, $d_2:\hat{U}\to \hat{U}^*$ and $e\in \hat{U}^*$ (where $\hat{U}^*$ is the dual space of $\hat{U}$) by
 $$\langle d_1(\hat{u}_i, \hat{u}_{-i}), w_i\rangle_{\hat{U}^*, \hat{U}}=\left(\alpha\sum_{k=1}^M \hat{q}_k^{\mbox{out}}, w_i^{qL} \right)_{L^2(0, T)},$$
  $$\langle d_2(\hat{u}_i), w_i\rangle_{\hat{U}^*, \hat{U}}=\left(\alpha\hat{q}_i^{\mbox{out}}, w_i^{qL} \right)_{L^2(0, T)},$$
  $$\langle e, w_i\rangle_{\hat{U}^*, \hat{U}}=\left(\beta-\hat{\alpha}q0(L), w_i^{qL} \right)_{L^2(0, T)}.$$
 Similarly, let $z_i=(z_i^{\mbox{in}}, z_i^{\mbox{out}})\in \hat{Q}$, and define the mappings  $\tilde{d}_1:\hat{Q} \times \hat{Q}^{M-1}\to \hat{Q}^*$, $\tilde{d}_2:\hat{Q}\to \hat{Q}^*$ and $\tilde{e}\in \hat{Q}^*$ (where $\hat{Q}^*$ is the dual space of $\hat{Q}$) by
 $$\langle \tilde{d}_1(\hat{q}_i, \hat{q}_{-i}), z_i\rangle_{\hat{Q}^*, \hat{Q}}=\left(\alpha\sum_{k=1}^M \hat{q}_k^{\mbox{out}}, z_i^{\mbox{out}} \right)_{L^2(0, T)},$$
  $$\langle \tilde{d}_2(\hat{q}_i), z_i\rangle_{\hat{Q}^*, \hat{Q}}=\left(\alpha\hat{q}_i^{\mbox{out}}, z_i^{\mbox{out}} \right)_{L^2(0, T)},$$
  $$\langle \tilde{e}, z_i\rangle_{\hat{Q}^*, \hat{Q}}=\left(\beta-\hat{\alpha}q0(L), z_i^{\mbox{out}} \right)_{L^2(0, T)}.$$
  Then, by the proof of Theorem 4.1 in \cite{GNEP}, we have that the inclusion
  \begin{eqnarray}\label{GNEP6}
  0^*\in \tilde{d}_1(\hat{q}_i, \hat{q}_{-i}) +\tilde{d}_2(\hat{q}_i)+\partial \hat{g}^q_i(\hat{q}_i)+N_{\hat{Q}_i^{\mbox{ad}}}(\hat{q}_i)+(\eta^{\hat{q}})
  \end{eqnarray}
 (where $\eta^{\hat{q}}\in \hat{Q}^*$ and $\hat{g}^q_i(\hat{q}_i)=g_i^q(\hat{q}_i+\hat{q}^0)$) holds for all $\hat{u}_i$, which implies that the inclusion
\begin{eqnarray}\label{GNEP7}
  &&0^*\in d_1(\hat{u}_i, \hat{u}_{-i}) +d_2(\hat{u}_i)+\partial \hat{g}_i(\hat{u}_i)+N_{\hat{U}_i^{\mbox{ad}}}(\hat{u}_i)+\hat{S}_i^*\mu-e,~\forall i\in \{1,\dots, M\}
  \end{eqnarray}
  also holds for $\hat{u}\in \hat{\mathcal{U}}:=\hat{U}^M$, where $\mu \in N_K(\hat{S}(\hat{u}))$ and $\hat{g}_i(\hat{u}_i)=g^p(\hat{p}_i+\hat{p}^0)+\hat{g}_i^q(\hat{q}_i)$. Moreover, any solution $\hat{u}$ of the inclusion \eqref{GNEP7} is a solution of the GNEP \eqref{GNEP5}.\\
  Now, let $\tilde{\mathcal{U}}:=\hat{P}\times \hat{Q}^M$ and $\tilde{Z}:=\hat{P}^*\times \prod_i\hat{Q}^*$, define the operators $\tilde{A}:\tilde{\mathcal{U}}\to \tilde{Z}$ and $\tilde{B}:\tilde{\mathcal{U}}\to 2^{\tilde{Z}}$ by
$$\tilde{A}(\tilde{p}, \hat{q}) :=  \tilde{d}_1(\hat{q}_i, \hat{q}_{-i}) +\tilde{d}_2(\hat{q}_i)~~ \mbox{and}~~\tilde{B}(\tilde{p}, \hat{q}) := \partial \hat{g}^q_i(\hat{q}_i)+N_{\hat{Q}_i^{\mbox{ad}}}(\hat{q}_i)+(\eta^{\hat{q}}).$$ Then the inclusion \eqref{GNEP6} becomes
$$0^*\in \tilde{A}(\tilde{p}, \hat{q})+\tilde{B}(\tilde{p}, \hat{q}).$$
Also,  $\tilde{A}$ is linear, monotone and demicontinuous with full domain, and $\tilde{B}$ is maximal monotone with full domain (see \cite{GNEP}). Hence, by setting $\mathcal{A}=\tilde{A}$ and $\mathcal{B}=\tilde{B}$ in Algorithm \ref{al1} or Algorithm \ref{AL1A}, we can apply our results to approximating the solutions of the GNEP.

\section{Conclusions} \label{Se6}
\noindent
In this paper we have proposed two iterative algorithms for solving the monotone inclusion problem \eqref{MVIP} in real reflexive Banach spaces. As we have seen, the methods have simple and elegant structures, and they require only one evaluation of the single-valued operator $\mathcal{A}$ at each iteration.  We have established weak convergence results when the set-valued operator $\mathcal{B}$ is maximal monotone, and $\mathcal{A}$ is monotone and Lipschitz continuous, and we obtained strong convergence results when either $\mathcal{A}$ or $\mathcal{B}$ is, in addition, required to be strongly monotone. We also obtained a rate of convergence result and applied our results to solving generalized Nash equilibrium problems in real reflexive Banach spaces.

\hfill

\noindent {\bf Acknowledgments.} \\

The second author was partially supported by the Israel Science Foundation (Grant 820/17), by the Fund for the Promotion of Research at the Technion and by the Technion General Research Fund.\\ \\

\noindent
{\bf Declarations}\\

\noindent
{\bf Conflict of interest.} The authors declare that they have no conflict of interest.\\

\noindent
{\bf Availability of data and material.} Not applicable.\\

\noindent
{\bf Code availability.} Not applicable.\\

\noindent
{\bf Authors’ contributions.} All the authors contributed to this paper.

	\end{document}